\newtheorem{defn}{Definition}[section]
\newtheorem{lem}[defn]{Lemma}
\newtheorem{theo}[defn]{Theorem}
\theoremstyle{remark}
\theoremstyle{remark}
\newtheorem{ex}[defn]{Example}
\newcommand{\vc}[1]{\boldsymbol{#1}}
\newcommand{\vone}{\vc 1}
\newcommand{\diag}{\mathrm{diag}}
\renewcommand{\sp}{\mathrm{sp}}
\newcommand{\p}{\mathbb{P}}
\renewcommand{\aa}{\mathcal{A}}
\newcommand{\zz}{\mathcal{Z}}
\newcommand{\E}{\mathbb{E}}
\begin{document}

\title{Lyapunov exponents for branching processes in a random environment:  The effect of information}
\author{Sophie Hautphenne\footnote{The University of Melbourne, Department of Mathematics and Statistics, Victoria 3010, Australia; sophiemh@unimelb.edu.au}
\and
Guy Latouche\footnote{%
Universit\'e Libre de Bruxelles, D\'epartement d'Informatique,
CP~212, Boulevard du Triomphe, 1050 Bruxelles, Belgium; latouche@ulb.ac.be}
}

\maketitle

\begin{abstract}
  We consider multitype Markovian branching processes evolving in a
  Markovian random environment.  To determine whether or not the branching
  process becomes extinct almost surely is akin to computing
  the maximal Lyapunov exponent of a sequence of random matrices,
  which is a notoriously difficult problem. We define dual processes and we construct bounds for the  Lyapunov exponent.  The bounds are obtained  by adding or by removing information: to add information results in a lower bound, to remove information results in an upper bound and we show that to add more information gives smaller lower bounds.   We give a few illustrative examples and we observe that the  upper bound is
  generally more accurate than the lower bound.
\end{abstract}

\section{Introduction}

We consider an irreducible multitype Markovian branching process with~$r$ types of individuals and  
we assume that its parameters
vary over time according to a Markovian random environment
$\{X(t):t\in\mathbb{R}^+\}$.  This is an irreducible
continuous-time Markov chain on the finite state space
$E=\{1,2,\ldots,m\}$

In the absence of a random environment, the
parameters of the branching process stay constant over time, and an
individual of type $i$ ($1\leq i \leq r$) lives for an exponentially
distributed amount of time, after which it
generates a random number of children of each type $j$, $1\leq j \leq
r$. It is well known  (Athreya and
Ney~\cite{an72})  that the mean population size matrix $M(t)$, for which $M_{ij}(t)$ is
the conditional expected number of individuals of type $j$ alive at
time $t$, given that the population starts at time 0 with one single
individual of type $i$, is given by $M(t)=e^{\Omega t}$ for some 
matrix $\Omega$ with nonnegative off-diagonal elements.   Furthermore, extinction of the branching process occurs with
probability  one if and only if $\lambda \leq 0$, where 
$\lambda$ is the eigenvalue of maximal real part of $\Omega$.

Here, we associate a matrix $\Omega_\ell$ to each environmental state $\ell$, $\ell=1,\ldots,m$,  and the process $\{X(t)\}$ controls the parameters of the
multitype branching process  in such a way
that the mean population size matrix is $M_\ell(t) := e^{\Omega_\ell t}$ during intervals of time over which
$X(\cdot)=\ell$.

The following theorem gives a necessary and sufficient condition for the almost sure extinction of the branching process in a Markovian random environment.  The notation is as follows:
$\hat\Omega_k= \Omega_{\hat X_k}$ where $\{\hat{X}_k:k\in\mathbb{N}\}$ is the  jump chain associated with the random environment $\{X(t)\}$, 
and $\{\xi_k:k\in\mathbb{N}\}$ is  the sequence of sojourn times in the successive environmental  states.
\begin{theo} {\rm (Tanny~\cite[Theorem 9.10]{tanny81})}  There exists a constant $\omega$ such that 
\begin{equation}\label{mp}\omega = \lim_{n \rightarrow \infty}
  \frac{1}{n} \log \{e^{\hat\Omega_0 \xi_0}\,
  e^{\hat\Omega_1 \xi_1}\,\cdots \,e^{\hat\Omega_{n-1}
    \xi_{n-1}}\}_{ij}
\end{equation} 
almost surely, independently of $i$ and $j$.
Extinction is almost sure if and only if $\omega\leq 0$.
\end{theo}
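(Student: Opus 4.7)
The plan is to establish the statement in two phases: first the existence and almost-sure convergence of the limit in \eqref{mp}, then the link with extinction.

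For the first phase, I would apply the Furstenberg--Kesten theorem (the matrix-valued form of Kingman's subadditive ergodic theorem) to the stationary ergodic sequence of random matrices $A_k := e^{\hat\Omega_k \xi_k}$, $k \geq 0$. Stationarity is achieved by starting $\{\hat X_k\}$ from the stationary distribution of the embedded jump chain, and ergodicity follows from irreducibility of $\{X(t)\}$ together with the conditional independence of the sojourn times $\xi_k$ given the environment. Setting $M_n := A_0 A_1 \cdots A_{n-1}$, the sequence $\log \|M_n\|$ is subadditive for any sub-multiplicative norm, so $n^{-1}\log \|M_n\|$ converges almost surely to a deterministic constant~$\omega$. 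To pass from norm to individual entries, note that each $\Omega_\ell$ has nonnegative off-diagonal elements and the branching process is irreducible, so the matrices $e^{\Omega_\ell t}$ are entrywise strictly positive for $t>0$. A Birkhoff projective-contraction argument then produces random constants $0 < c_n \leq C_n$ with $c_n \|M_n\| \leq \{M_n\}_{ij} \leq C_n\|M_n\|$ for every $i,j$ and $n^{-1}\log c_n, n^{-1}\log C_n \to 0$ almost surely. This gives the claimed limit and its independence of $i,j$.

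For the second phase, I would use the fact that $\{M_n\}_{ij}$ equals the conditional expected number of type-$j$ individuals alive at time $T_n := \xi_0 + \cdots + \xi_{n-1}$, given the environment sample path and one type-$i$ ancestor. Let $Z_n$ denote the total population size at time $T_n$. Then $\E[Z_n \mid X, Z_0 = \vc{e}_i] = \vone^\top M_n^\top \vc{e}_i$ behaves almost surely like $e^{n\omega}$. If $\omega<0$, Markov's inequality gives $\pp(Z_n \geq 1 \mid X) \leq \E[Z_n \mid X] \to 0$ almost surely, and since the survival event is contained in $\bigcap_n \{Z_n \geq 1\}$, extinction is almost sure. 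If $\omega>0$, positive survival probability is obtained by a second-moment argument showing that, under the size-biased measure, $Z_n e^{-n\omega}$ is bounded below in $L^2$ on $\{Z_n>0\}$, so the survival event has positive probability conditional on the environment, hence unconditionally by ergodicity. The critical case $\omega=0$ still yields almost-sure extinction, via a truncation argument combined with the recurrence of the random walk of log-means associated with the process.

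The main obstacle is the supercritical/critical half of the extinction criterion. The convergence statement is a routine (if technical) application of Furstenberg--Kesten once entrywise positivity of the products is established, and the subcritical direction is immediate from Markov's inequality. However, showing that $\omega>0$ forces positive survival probability, and that $\omega=0$ nonetheless produces almost-sure extinction, requires genuine second-moment control over the multitype offspring distributions in a Markov-modulated environment. This is a substantive generalisation of the classical Galton--Watson and Athreya--Karlin dichotomy, and is the heart of Tanny's contribution.
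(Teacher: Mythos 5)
You should first note that the paper offers no proof of this statement at all: it is imported verbatim from Tanny (Theorem 9.10 of \cite{tanny81}), so the only meaningful comparison is with the proof in the literature (Tanny, building on Athreya and Karlin \cite{ak71}). Against that benchmark, the first half of your sketch is sound: Kingman/Furstenberg--Kesten applied to $A_k=e^{\hat\Omega_k\xi_k}$ (with the easy integrability check $\E\log^+\|A_k\|\le\|\hat\Omega_k\|\,\E\xi_k<\infty$, which you should state), plus strict positivity of $e^{\Omega_\ell t}$ from irreducibility and a projective-contraction argument to pass from $\|M_n\|$ to $\{M_n\}_{ij}$, is essentially how the a.s.\ limit is obtained; you only need a sentence explaining why the constant does not depend on the initial law of $\hat X_0$ (finitely many environments, irreducibility, and the fact that discarding the first factor does not change the limit).

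The genuine gaps are in the extinction dichotomy. For $\omega>0$ you propose an $L^2$/size-biased argument, but the theorem assumes nothing beyond existence of the mean matrices $\Omega_\ell$, so second moments (or even $x\log x$ moments) of the offspring laws need not exist and the proposed bound ``$Z_ne^{-n\omega}$ bounded below in $L^2$'' is not available; the standard repairs are either a truncation of the offspring distributions chosen so that the truncated process is still supercritical, followed by comparison, or --- as in Tanny and Athreya--Karlin --- working directly with the backward iteration of the environment-dependent generating functions for the conditional extinction probabilities, which avoids moment hypotheses altogether. Your sketch contains neither step, and this is precisely the part you yourself identify as the heart of the theorem. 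The critical case is worse than ``technical'': as stated, ``$\omega=0$ implies a.s.\ extinction'' is false without non-degeneracy side conditions (take $r=1$ and every dying individual replaced by exactly one child: then $\Omega_\ell^{}=0$, $\omega=0$, and the population is never extinct), so the vague appeal to ``a truncation argument combined with recurrence of the random walk of log-means'' cannot stand on its own; any correct proof must invoke, and your write-up must record, the hypotheses under which Tanny's Theorem 9.10 excludes such degenerate processes.
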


The $(i,j)$th entry of the random matrix product in \eqref{mp} is the conditional expected number of individuals of type $j$ alive just before the $n$th environmental state change, given that the population starts at time 0 with one single individual of type $i$ and given the history of the environmental process. The limit  $\omega$ may be interpreted as the asymptotic growth rate of the population, and it takes  different forms since one also has
\begin{eqnarray}
   \label{eq:ealpha}
\omega &= &\lim_{n \rightarrow \infty} \frac{1}{n} \E \log
\{e^{\hat\Omega_0 \xi_0}\,
  e^{\hat\Omega_1 \xi_1}\,\cdots \,e^{\hat\Omega_{n-1}
    \xi_{n-1}}\}_{ij} 
\quad \mbox{for all $i$, $j$,}
\\\nonumber
  & =& \lim_{n \rightarrow \infty} \frac{1}{n} \log ||e^{\hat\Omega_0 \xi_0}\,
  e^{\hat\Omega_1 \xi_1}\,\cdots \,e^{\hat\Omega_{n-1}
    \xi_{n-1}}||
  \\\label{eq:ealph}
& = &\lim_{n \rightarrow \infty} \frac{1}{n} \E \log ||e^{\hat\Omega_0 \xi_0}\,
  e^{\hat\Omega_1 \xi_1}\,\cdots \,e^{\hat\Omega_{n-1}
    \xi_{n-1}}||,
\end{eqnarray}
independently of the matrix norm, as shown in Athreya and Karlin~\cite{ak71}, and in Kingman~\cite{kingm73}.

The parameter $\omega$ may be likened  to a Lyapunov
exponent: given a set of
real matrices $\mathcal{A}=\{A_k:k\geq 1\}$ and a probability distribution ${\p}$ on
$\mathcal{A}$, the maximal Lyapunov exponent $\rho$ for $\aa$ and ${\p}$ is defined to be
\begin{align*}
\rho(\mathcal{A},{\p}) & = \lim_{n \rightarrow \infty} 1/n\, \E\log||A_1\ldots A_n||,
\end{align*}
where $A_k$, $k \geq 1$, are independent and identically distributed random matrices on $\mathcal{A}$ with the distribution $\p$; 
the limit exists and does not depend on the choice of  matrix norm
(Furstenberg and Kesten~\cite{furstenbergandkesten}, 
Oseledec~\cite{oseledec}).  
In our case, $\aa=\{e^{\Omega x}:
\Omega\in \{\Omega_1, \ldots, \Omega_m\}, x \geq 0\}$, the probability distribution
$\p$ is induced by the random environment, and the matrices $A_k$ are
not independent nor identically distributed.  

Lyapunov exponents are hard to compute
(Kingman~\cite{kingm73}, Tsitsiklis and Blondel~\cite{tb97}), except
under very special circumstances, such as in Key~\cite{key87} where the
matrices in the family are assumed to be simultaneously diagonalizable,
or in Lima and Rahibe~\cite{limaandrahibe} where $\mathcal A$ contains two matrices of order 2 only, one of
which is singular.
 For a
thorough survey on the basics of Lyapunov exponents, we refer to
Watkins~\cite{watkins}. 

If $\mathcal A$ is a finite set of nonnegative matrices, Gharavi and
Anantharam~\cite{gharaviandanantharam} give an upper bound for
$\rho(\aa,\p)$  in the form of the maximum of a nonlinear
concave function.
Key~\cite{key90} gives both
upper and lower bounds  determined as follows, on the basis
of (\ref{eq:ealpha}, \ref{eq:ealph}).  Define 
$\sigma_n  =  1/n\, \E\log  ||A_1\cdots A_n|| $ 
and
$\sigma^*_n   =  1/n \,\E\log \{A_1\cdots A_n\}_{jj}$
for some arbitrarily fixed $j$.  One verifies that $\{\sigma_{2^k}\}$
is non-increasing and that $\{\sigma^*_{2^k}\}$ is non-decreasing, so
that these form sequences of upper and lower bounds for $\omega$.
They are, however, not much easier to compute than the Lyapunov
exponent itself.

In the absence of an easily computed exact expression for $\omega$, we look for upper and lower bounds, in  an adaptation of  the approach  developed in \cite{hautphenne2013markovian} for  branching processes subject to binomial catastrophes at random times.  
Our bounds are obtained in three steps.  First, we replace the branching process by a dual, marked Markovian process.  This has the advantage that we deal with the trajectories of a simple continuous-time Markov chain, instead of the conditional expected number of individuals in each type of the original branching process.  Next, we take the expectation with respect to  the $\hat X_k$s and the $\xi_k$s, and obtain an upper bound for $\omega$.  In the third step, we add some information about the history of the dual process and so obtain a lower bound.   
In some cases, we have some leeway in the amount of information that we need to add to obtain a lower bound, and we show that the bound is smaller when  we add more information.  Thus, roughly speaking, we may say that the conditional expectation in (\ref{mp}) is taken with respect to a finely balanced information;  less information leads to an upper bound, while more information yields a lower bound.

Our approach may be used for the analysis of other systems, beyond branching processes.  For instance, Bolthausen and Goldsheid analyse in \cite{bolthausen2000recurrence} a random walk on a strip subject to a random environment and their parameter $\lambda$ in  \cite[Equation 6]{bolthausen2000recurrence} is a Lyapunov exponent.   Such expressions typically arise in the analysis of systems in random environments and should be amenable to an approach similar to ours.


The paper is organised as follows.  The dual process and the marks
are defined in Section~\ref{s:dual} and we determine an
upper and a lower bounds for $\omega$.   
We  discuss in Section~\ref{2du}
an alternative  definition of the dual Markov chain: this second dual yields the same upper bound but a different lower bound.    We give two
illustrative examples in Section~\ref{s:examples}, showing that the ordering of the lower bounds depends on the case.   We
show
in Section~\ref{s:improve} how the lower bound may be increased in the special circumstance where the environmental process $\{X(t)\}$ is cyclic.  Finally,  we prove in 
Section~\ref{s:tightness}     
that our bounds are tight.

\section{Dual process}
   \label{s:dual}

We denote by $Q$ the generator of the environmental process $\{X(t)\}$ and by  $c_\ell = |Q_{\ell\ell}|$, for $1 \leq \ell \leq
m$,  the parameters of the exponential distributions of
sojourn times in the different environmental states.  

   We assume that the matrices $\Omega_\ell$, $1 \leq \ell \leq m$,
   are all irreducible.  Then, each matrix has an eigenvalue
   $\lambda_\ell$ of maximal real part, which is the asymptotic growth
   rate of the population under
   the conditions prevailing while the environmental process is in
   state $\ell$.  The matrices
   $\Omega^*_\ell$ defined by $\Omega^*_\ell=\Omega_\ell-\lambda_\ell
   I$ have each one eigenvalue equal to zero, the other eigenvalues
   having a strictly negative real part.  Furthermore, the
   corresponding left- and right-eigenvectors $\vc u_\ell$ and $\vc
   v_\ell$ are strictly positive and may be normalised by $\vc
   u_\ell\vc1=1$ and $\vc u_\ell\vc v_\ell=1$.

We proceed in two steps.  Firstly, we write (\ref{mp}) as 
\begin{eqnarray}
   \nonumber
\omega &=& \lim_{n \rightarrow \infty}
  \frac{1}{n}[\lambda_1\sum_{k=1}^{n_1}\xi_k^{(1)}+\ldots+\lambda_m\sum_{k=1}^{n_m}\xi_k^{(m)}]  \\
   \label{e:omegA}
  &&+\lim_{n \rightarrow \infty} \frac{1}{n} \log
  \{e^{\hat\Omega^*_0 \xi_0}\, e^{\hat\Omega^*_1
      \xi_1}\,\cdots \,e^{\hat\Omega^*_{n-1}
      \xi_{n-1}}\}_{ij}    \qquad \mbox{a.s.,}
\end{eqnarray}
where $n_\ell=\sum_{k=0}^{n-1}\mathds{1}_{\{\hat{X}_{k}=j\}}$ is the
number of times the environment visits state $\ell$ during the first
$n$ environmental transitions, $\hat\Omega^*_k = \Omega^*_{\hat X_k}$,
and $\xi_k^{(\ell)}$ is the length of the $k$th sojourn interval in
state $\ell$. 

The random variables $\{\xi_k^{(\ell)}, k\geq 1\}$ are independent and
exponentially distributed with parameter $c_\ell$, so that the sum
$\sum_{k=1}^{n_\ell}\xi_k^{(\ell)}/n_\ell$ converges almost surely to
$\E[\xi^{(\ell)}]=1/c_\ell$ by the Strong Law of Large Numbers.  For
the same reason, $n_\ell/n \rightarrow\hat\pi_\ell$ with probability
one as $n\rightarrow\infty$, for all $\ell$, where $\hat{\vc \pi}$ is
the stationary distribution of the jump chain $\{\hat{X}_k\}$.
Therefore, we may write (\ref{e:omegA}) as
\begin{equation}
   \label{e:omegB}
\omega = (\lambda_1\hat{\pi}_1
    /c_1+\ldots+\lambda_m\hat{\pi}_m/c_m)+\Psi  
\end{equation}
where $\Psi$ is a constant such that 
\begin{eqnarray}\label{psi1}\Psi=\lim_{n \rightarrow \infty}
  \frac{1}{n} \log \{e^{\hat\Omega^*_0 \xi_0}\, e^{\hat\Omega^*_1
      \xi_1}\,\cdots \,e^{\hat\Omega^*_{n-1}
      \xi_{n-1}}\}_{ij}   \qquad \mbox{a.s.}
\end{eqnarray}
In the second step, we define $\Delta_\ell=\diag(\vc
v_\ell)$ and 
\begin{equation}
   \label{theta}
\Theta_\ell=\Delta_\ell^{-1}\,\Omega^*_\ell \,\Delta_\ell, \qquad
\mbox{for $1\leq \ell\leq m$.} 
\end{equation}
It is easy to verify for each $\ell$ that $\Theta_\ell$ is a
generator: it has nonnegative off-diagonal elements and the row sums
are equal to zero, so that the diagonal elements are strictly negative.
We have 
$e^{\Omega^*_\ell}=\Delta_\ell \, e^{\Theta_\ell} \, \Delta_\ell^{-1}$,
so that
\begin{equation}
   \label{psi2}
\Psi  =  \lim_{n \rightarrow \infty} \frac{1}{n} \log \{
\hat\Delta_0  \,  e^{\hat\Theta_0 \xi_0} \, \hat\Delta_0^{-1} \,  \hat\Delta_1 \,
e^{\hat\Theta_1 \xi_1} \, \hat\Delta_1^{-1} \, \cdots 
\, \hat\Delta_{n-1} \, e^{\hat\Theta_{n-1} \xi_{n-1}} \, \hat\Delta_{n-1}^{-1}
\}_{ij}
\end{equation}
almost surely, where $\hat\Delta_k = \Delta_{\hat X_k}$ and $ \hat\Theta_k =
\Theta_{\hat X_k}$.

While the probabilistic interpretation of the matrix product in
\eqref{psi1} is not obvious, we can easily give one to the equivalent
random matrix product in~\eqref{psi2}:   we replace the whole
branching process by a two-dimensional Markov chain $\{(X(t),
\varphi(t)): t \geq 0\}$ on the state space $\{1 \ldots m\} \times \{1
\ldots r\}$, with generator 
\[
Q_\Theta = Q \otimes I_r + 
\begin{bmatrix}
\Theta_1 \\ & \Theta_2 \\ & & \ddots \\ & & & \Theta_m
\end{bmatrix},
\]where $I_r$ is the identity matrix of size $r$ (we indicate the size of $I$ when it is not clear by the context).
Like before, $\{X(t)\}$ is a Markov chain with generator $Q$ and the
dual process $\{\varphi(t)\}$ evolves according to the generator $\Theta_\ell$ as
long as $X(\cdot)$ remains equal to $\ell$.  We define as follows the epochs
$\{\tau_k : k = 0, 1, \ldots\}$ of transition for the component
$X(\cdot)$ of the process:
\[
\tau_0 = 0, \qquad \qquad \tau_{k+1} = \tau_k + \xi_k, \quad k \geq 0,
\]
and we define the process $\{\varphi_k, k\in\mathbb{N}\}$  with $\varphi_k=\varphi(\tau_k)$,
\emph{embedded} at the jump epochs for $\{X(t)\}$.
Finally, we associate a sequence $\{Z_k : k = 0, 1, 2, \ldots\}$ of marks to the intervals
$[\tau_k, \tau_{k+1})$, with
\begin{equation}
   \label{e:markz}
Z_k=
(\hat\Delta_k)_{\varphi_k}  (\hat\Delta_k^{-1})_{\varphi_{k+1}} 
=
\dfrac{
  (\vc v_{\hat{X}_k})_{\varphi_k}
}{
(\vc  v_{\hat{X}_k})_{\varphi_{k+1}}
},
\end{equation}
for $k \geq 0$.

\begin{lem}
   \label{t:markproduct}
The parameter $\omega$ may be written as
\begin{equation}
   \label{e:omegC}
\omega = \hat{\vc\pi} C^{-1} \vc\lambda + \Psi
\end{equation}
where $\hat{\vc\pi}$ is the stationary probability vector of the 
environmental jump chain $\{\hat X_k\}$,  $C= \diag(c_1, \ldots, c_m)$, 
$\vc\lambda = (\lambda_1, \ldots , \lambda_m)^\top$, 
and
\begin{equation}
   \label{psi3}
\Psi   =   \lim_{n \rightarrow \infty} \frac{1}{n} \log
\E[Z_0\,Z_1\,\cdots\,Z_{n-1}\,\mathds{1}_{\{\varphi_n=j\}}\,
   |\,\varphi_{0}=i,\vc\xi^{(n)},\hat{\vc X}^{(n)}]  \qquad \mbox{a.s.},
\end{equation}
with $\vc\xi^{(n)}  = (\xi_0, \ldots, \xi_{n-1})$, and $\hat{\vc X}^{(n)}=(\hat
X_0, \ldots, \hat X_{n-1})$.
\end{lem}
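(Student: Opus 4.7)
The plan has two essentially independent pieces, corresponding to the two terms in \eqref{e:omegC}.

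For the first term, I would start from \eqref{e:omegB}, which has already been derived just before the lemma statement using the Strong Law of Large Numbers applied to the sojourn counts $n_\ell/n\to\hat\pi_\ell$ and to the averages of $\xi_k^{(\ell)}$. The scalar $\lambda_1\hat\pi_1/c_1+\cdots+\lambda_m\hat\pi_m/c_m$ is by definition the inner product $\hat{\vc\pi}\,C^{-1}\vc\lambda$, so nothing more needs to be done for this piece.

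The main work is to reconcile the matrix-product form of $\Psi$ in \eqref{psi2} with the probabilistic expression in \eqref{psi3}. Since each $\Theta_\ell$ is a generator, $e^{\Theta_\ell\xi}$ is a stochastic matrix and, given the environmental path $\hat{\vc X}^{(n)}$ and sojourn vector $\vc\xi^{(n)}$, the embedded dual chain $\{\varphi_k\}$ is a time-inhomogeneous Markov chain with transition probabilities $(e^{\hat\Theta_k\xi_k})_{\varphi_k,\varphi_{k+1}}$ (this follows from the block-diagonal structure of the $\Theta$-part of $Q_\Theta$, which says that transitions of $X$ do not alter $\varphi$). Therefore
\[
\E[Z_0\cdots Z_{n-1}\mathds{1}_{\{\varphi_n=j\}}\mid\varphi_0=i,\vc\xi^{(n)},\hat{\vc X}^{(n)}]
=\sum_{\varphi_1,\ldots,\varphi_{n-1}}\prod_{k=0}^{n-1}(e^{\hat\Theta_k\xi_k})_{\varphi_k,\varphi_{k+1}}\prod_{k=0}^{n-1}Z_k,
\]
with the convention $\varphi_0=i$, $\varphi_n=j$.

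It then remains to show that this sum equals the $(i,j)$ entry of the matrix product inside \eqref{psi2}. The $\hat\Delta_k$'s being diagonal, expanding that entry over the intermediate indices produces the same kernel $\prod_k(e^{\hat\Theta_k\xi_k})_{\varphi_k,\varphi_{k+1}}$ multiplied by the scalar
\[
(\hat\Delta_0)_{\varphi_0}\cdot\prod_{k=0}^{n-2}\frac{(\hat\Delta_{k+1})_{\varphi_{k+1}}}{(\hat\Delta_k)_{\varphi_{k+1}}}\cdot\frac{1}{(\hat\Delta_{n-1})_{\varphi_n}}.
\]
A direct regrouping of the product $\prod_{k=0}^{n-1}Z_k=\prod_{k=0}^{n-1}(\hat\Delta_k)_{\varphi_k}/(\hat\Delta_k)_{\varphi_{k+1}}$ yields exactly this scalar (each intermediate $\varphi_{k+1}$ contributes the ratio $(\hat\Delta_{k+1})_{\varphi_{k+1}}/(\hat\Delta_k)_{\varphi_{k+1}}$, with boundary factors at $k=0$ and $k=n-1$). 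Taking $\log$, dividing by $n$, and letting $n\to\infty$ in \eqref{psi2} gives \eqref{psi3}, and substituting into \eqref{e:omegB} yields \eqref{e:omegC}.

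The only real obstacle is the bookkeeping in the telescoping identification of $\prod_k Z_k$ with the scalar factor arising from $\hat\Delta_k$ and $\hat\Delta_k^{-1}$; everything else (the Markov property of $\{\varphi_k\}$ given the environment, and the reuse of \eqref{e:omegB}) is immediate from the definitions already in place.
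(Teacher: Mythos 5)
Your proposal is correct and follows essentially the same route as the paper: it reuses \eqref{e:omegB} for the term $\hat{\vc\pi}C^{-1}\vc\lambda$ and identifies the conditional expectation of $Z_0\cdots Z_{n-1}\mathds{1}_{\{\varphi_n=j\}}$ given the environment with the $(i,j)$ entry of the matrix product in \eqref{psi2}, via the conditional Markov structure of $\{\varphi_k\}$ and the telescoping of the marks. The paper states this through the one-step identity $\E[Z_k\,\mathds{1}_{\{\varphi_{k+1}=j\}}\,|\,\varphi_k=i,\xi_k,\hat X_k]=(\hat\Delta_k e^{\hat\Theta_k\xi_k}\hat\Delta_k^{-1})_{ij}$ and leaves the chaining as a ``simple calculation,'' which is exactly the bookkeeping you carry out explicitly.
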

\begin{proof}
Equation   (\ref{e:omegC}) is merely (\ref{e:omegB}) written  in a more compact manner.
Furthermore, one has 
\[
\E[Z_k\,\mathds{1}_{\{\varphi_{k+1}=j\}}\,|\,\varphi_{k}=i,\xi_k,\hat{X}_k]
= (\hat\Delta_k e^{\hat\Theta_k \xi_k} \hat\Delta_k^{-1})_{ij},\quad k\geq 0,
\]
and a simple calculation shows that (\ref{psi2}) and (\ref{psi3}) are equivalent.
\end{proof}

We recognise in the first term of (\ref{e:omegC}) the expected
long-term growth rate of the population, while  the second term reflects the fact
that changes in the environment influence all individuals
simultaneously.  The advantage of (\ref{psi3}) over (\ref{psi2}) is
that we now deal with a product of scalar random variables instead of
a product of random matrices.  In the sequel, we shall occasionally
use the notation 
\begin{equation}
   \label{e:zz}
\zz_{n,j} = Z_0 Z_1 \cdots Z_{n-1} \mathds{1}_{\{\varphi_n=j\}}
\end{equation}
in order to simplify the presentation of a few arguments.

Now, we proceed like in~\cite{hautphenne2013markovian}: 
we condition on less information  to find an upper bound,
and we condition on more information to find a lower bound.

\subsection{Upper bound}
   \label{s:upper}
 
To simplify the notation, we define the matrices 
\begin{equation}
   \label{e:Ml}
M_\ell= c_\ell\,\Delta_\ell (c_\ell I -\Theta_\ell)^{-1}\,\Delta_\ell^{-1} = c_\ell[(c_\ell+\lambda_\ell)I-\Omega_\ell]^{-1},
\end{equation}
for $\ell=1,2,\ldots, m$, and the matrix 
\begin{equation}
   \label{e:M}
M = \begin{bmatrix}
M_1 \\ & M_2 \\ & & \ddots \\ & & & M_m
\end{bmatrix}
\end{equation}
of order $rm$.  We also define the transition matrix 
$\hat P = I + C^{-1} Q$ of the jump chain
$\{\hat{X}_k\}$.

\begin{theo} 
   \label{t:omegaU}
An upper bound for 
   $\omega$ is given by  $\omega_U$ with
\[
\omega_U=
   \hat{\vc\pi} C^{-1} \vc\lambda + \log \mathrm{sp}[M(\hat{P}\otimes I_r)].
\]
\end{theo}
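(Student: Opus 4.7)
The plan is to start from the expression
\[
\omega = \hat{\vc\pi} C^{-1}\vc\lambda + \Psi
\]
given in Lemma~\ref{t:markproduct} and bound $\Psi$ from above. Since $\Psi$ is a deterministic constant (the almost-sure limit in \eqref{psi3}), I may take expectations and, assuming the $L^1$-convergence one normally gets for Lyapunov-exponent limits, rewrite
\[
\Psi = \lim_{n\to\infty} \frac{1}{n} \E\bigl[\log \E[\zz_{n,j}\,|\,\varphi_0=i,\vc\xi^{(n)},\hat{\vc X}^{(n)}]\bigr].
\]
Applying Jensen's inequality (concavity of $\log$) to the outer expectation removes the conditioning on the environment, giving
\[
\Psi \le \lim_{n\to\infty} \frac{1}{n}\log \E[\zz_{n,j}\,|\,\varphi_0=i].
\]
This is the \emph{less information} step advertised in the introduction.

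Next I would identify the quantity $\E[\zz_{n,j}\mid\varphi_0=i]$ as the $(i,j)$-entry of a power of the matrix $T := M(\hat P\otimes I_r)$, after tracing through the enlarged Markov chain $\{(\hat X_k,\varphi_k)\}$. The pair $(\hat X_k,\varphi_k)$ is itself a Markov chain, since conditionally on $\hat X_k=\ell$ the sojourn $\xi_k\sim\mathrm{Exp}(c_\ell)$ and the generator $\Theta_\ell$ drive $\varphi_{k+1}$, while $\hat X_{k+1}$ is picked independently via $\hat P_{\ell,\cdot}$. A direct one-step computation gives
\[
\E\bigl[Z_k\,\mathds{1}_{\{\hat X_{k+1}=\ell',\,\varphi_{k+1}=j\}}\,\bigm|\,\hat X_k=\ell,\varphi_k=i\bigr]
= \hat P_{\ell\ell'}\,(\Delta_\ell\,c_\ell(c_\ell I-\Theta_\ell)^{-1}\Delta_\ell^{-1})_{ij}
= (M_\ell)_{ij}\,\hat P_{\ell\ell'},
\]
which is precisely $T_{(\ell,i),(\ell',j)}$ in view of \eqref{e:Ml}--\eqref{e:M}. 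Iterating the Markov property and summing out the intermediate $\hat X_k$ and $\varphi_k$ then yields
\[
\E[\zz_{n,j}\,|\,\varphi_0=i]
= \sum_{\ell_0,\ell_n} \p(\hat X_0=\ell_0)\,[T^n]_{(\ell_0,i),(\ell_n,j)}.
\]

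The final step is spectral: $T$ has nonnegative entries (each $M_\ell$ is a uniformisation-type matrix and $\hat P$ a jump-chain transition matrix), irreducibility of the $\Theta_\ell$ and of $\hat P$ transfers to $T$, so by Perron--Frobenius
\[
\frac{1}{n}\log[T^n]_{(\ell_0,i),(\ell_n,j)} \longrightarrow \log \sp(T),
\]
uniformly enough in the initial and terminal indices that the weighted sum above also has growth rate $\log\sp(T)$. Combined with the first display, this gives $\Psi\le \log\sp[M(\hat P\otimes I_r)]$ and the bound $\omega_U$ follows.

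The main technical point is the first step: justifying the interchange that lets me write $\Psi$ as the limit of expected log-likelihoods rather than an almost-sure limit. I would invoke the $L^1$ convergence that accompanies the standard Furstenberg--Kesten / Kingman theorem cited after \eqref{eq:ealph}, or, alternatively, dominate $\frac{1}{n}\log\E[\zz_{n,j}\mid\cdots]$ uniformly in $n$ using that each factor is bounded above in terms of $\|\Delta_\ell\|,\|\Delta_\ell^{-1}\|$ and $\xi_k$, after which dominated convergence applies. Everything else is direct matrix bookkeeping and an appeal to Perron--Frobenius.
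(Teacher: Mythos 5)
Your proposal is correct and follows essentially the same route as the paper: drop the conditioning on $(\vc\xi^{(n)},\hat{\vc X}^{(n)})$ via Jensen's inequality, identify $\E[\zz_{n,j}\mid\varphi_0=i]$ with entries of $[M(\hat P\otimes I_r)]^n$, and conclude via the spectral radius. The only difference is technical and minor: the paper avoids any appeal to $L^1$ convergence by working with $e^{\Psi}$ and combining Fatou's lemma with Jensen applied to $x\mapsto x^{1/n}$, whereas you invoke the expectation form of the subadditive limit (available from the same sources as (\ref{eq:ealpha})--(\ref{eq:ealph}), with $\Omega^*_\ell$ in place of $\Omega_\ell$) and then apply Jensen to the logarithm.
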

\begin{proof}
From (\ref{psi3}, \ref{e:zz}), we have 
$  
e^{\Psi}=\lim_{n\rightarrow\infty}
\E[\zz_{n,j}\,|\,\varphi_{0}=i,\vc\xi^{(n)},\hat{\vc
  X}^{(n)}]^{1/n}
$
a.s.
Taking on both sides expectations with
respect to $\vc\xi^{(n)}$ and $\hat{\vc X}^{(n)}$, we obtain
\begin{eqnarray*}e^\Psi&=&\E\left[\lim_{n\rightarrow\infty}
    \E[\zz_{n,j}\,|\,\varphi_{0}=i,\vc\xi^{(n)},\hat{\vc X}^{(n)}]^{1/n}\right]
  \\&\leq& \lim_{n\rightarrow\infty} \E
  \left[\E[\zz_{n,j}\,|\,\varphi_{0}=i,\vc\xi^{(n)},\hat{\vc X}^{(n)}]^{1/n}\right]\\&\leq&
  \lim_{n\rightarrow\infty} \E\left[
    \E[\zz_{n,j}\,|\,\varphi_{0}=i,\vc\xi^{(n)},\hat{\vc X}^{(n)}]\right]^{1/n}\\&=&\lim_{n\rightarrow\infty}
  \E[\zz_{n,j}\,|\,\varphi_{0}=i]^{1/n},
 \end{eqnarray*} 
where the first inequality follows from Fatou's Lemma, and the second follows
from Jensen's Inequality for concave functions.
 
By conditioning on $\hat{X}_0$ and $\xi_0$, we have
\begin{eqnarray*}\E[Z_0\,\mathds{1}_{\{\varphi_1=j\}}\,|\,\varphi_{0}=i]&=&\sum_{1\leq\ell\leq
    m} \alpha_\ell \int_0^\infty \left[\Delta_\ell e^{\Theta_\ell t}\,\Delta_\ell^{-1}
  \right]_{ij} \,c_\ell e^{-c_\ell t}\,dt\\&=&\sum_{1\leq\ell\leq m}
  \alpha_\ell \left[c_\ell\,\Delta_\ell(c_\ell I -\Theta_\ell)^{-1}\,\Delta_\ell^{-1} 
  \right]_{ij}  \\
 & = & \left[
  (\vc \alpha \otimes I_r) M (\vc 1\otimes I_r)\right]_{ij}
\end{eqnarray*} 
where $\vc\alpha$ is the initial probability vector of $\{X(t)\}$, and the vector
$\vc 1$ is of size~$m$.
By induction, one shows that 
\[
\E[ \zz_{n,j} \,|\,\varphi_{0}=i]=\left[
  (\vc \alpha \otimes I_r)[M(\hat{P}\otimes I_r)]^n(\vc 1\otimes
  I_r)\right]_{ij},
\]
so that 
\begin{eqnarray*}
e^\Psi & \leq & \lim_{n\rightarrow\infty}
\left[ (\vc \alpha \otimes I_r)[M(\hat{P}\otimes I_r)]^n(\vc 1\otimes
  I_r)\right]_{ij}^{1/n}\\ 
&=& \textrm{sp}[M(\hat{P}\otimes I_r)],
\end{eqnarray*} 
independently of $\vc\alpha$, $i$, and $j$.
\end{proof}

\subsection{Lower bound}
   \label{s:lower}

   To obtain a lower bound, we reverse the argument of
   Theorem~\ref{t:omegaU}: we start from a conditional expectation
   given {\em more} information.  To make use of the definition
   (\ref{e:markz}) of the marks, we need to consider two discrete-time Markov chains
   $\{Y_k^{(1)}\}$ and $\{Y_k^{(2)}\}$ embedded at the epochs
   $\{\tau_k\}$, with $Y_k^{(1)} =(\hat X_k, \varphi_k)$ and
   $Y_k^{(2)} =(\hat X_k, \varphi_{k+1})$.    For $\{Y_k^{(1)}\}$, a
   transition from $(\ell,i)$ to
$(\ell',j)$  has probability 
\[
P^{(1)}_{(\ell,i),(\ell',j)}=\int_0^\infty
\left(e^{\Theta_\ell t}\right)_{ij} c_\ell e^{-c_\ell
  t}dt \, \hat{P}_{\ell\ell'}= (N_\ell)_{ij} \hat{P}_{\ell\ell'}
\]  
where 
\begin{equation}
   \label{Ni}
N_\ell = c_\ell(c_\ell I-\Theta_\ell)^{-1} = \Delta_\ell^{-1}
M_\ell \Delta_\ell.
\end{equation}
A similar calculation shows that 
$P^{(2)}_{(\ell,i),(\ell',j)} = \hat{P}_{\ell\ell'}  (N_{\ell'})_{ij} $
and we write
\[
P^{(1)}=N(\hat{P}\otimes I_r),\qquad P^{(2)}=(\hat{P}\otimes I_r)N,
\]
with
\[
N = \begin{bmatrix}
N_1 \\ & N_2 \\ & & \ddots \\ & & & N_m
\end{bmatrix}.
\]

\begin{theo} 
   \label{t:omegal}
A lower bound for $\omega$ is given by $\omega_L$ with 
\begin{equation}
   \label{e:omegal}
\omega_L= \hat{\vc\pi} C^{-1} \vc\lambda + \vc\pi^{(1)}(I - N) \log \bar{\vc v},   
\end{equation} 
where $\vc\pi^{(1)}$ is the stationary probability vector of $P^{(1)}$
and 
\[
\bar{\vc v}=\begin{bmatrix} \vc v_1 \\\vc v_2 \\ \vdots \\ \vc v_m \end{bmatrix}.
\]
\end{theo}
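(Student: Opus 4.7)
The plan is to mirror the argument of Theorem~\ref{t:omegaU}: where the upper bound came from removing information and using Fatou plus Jensen for the concave map $x \mapsto x^{1/n}$, the lower bound will come from \emph{adding} information and using Jensen's inequality in the form $\log \E X \geq \E \log X$. As a preliminary reduction I observe that $\omega$, and hence $\Psi$, is independent of $j$, so summing \eqref{psi3} over $j$ (using $\sum_j \mathds{1}_{\{\varphi_n=j\}} = 1$ and absorbing the factor of $r$ into $(1/n)\log$) rewrites
\[
\Psi \;=\; \lim_{n \to \infty} \frac{1}{n} \log \E\!\left[Z_0 Z_1 \cdots Z_{n-1} \,\middle|\, \varphi_0=i, \vc\xi^{(n)}, \hat{\vc X}^{(n)}\right] \qquad \text{a.s.}
\]
Since the integrand is uniformly bounded on the logarithmic scale (finite state spaces, strictly positive $\vc v_\ell$), dominated convergence places an outer $\E$ in front of the $\log$ without changing the limit.

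Next, Jensen's inequality applied to the conditional expectation yields
\[
\log \E[Z_0 \cdots Z_{n-1} \mid \cdot] \;\geq\; \E[\log(Z_0 \cdots Z_{n-1}) \mid \cdot] \;=\; \sum_{k=0}^{n-1} \E[\log Z_k \mid \cdot],
\]
and the tower property then gives $\Psi \geq \lim_n (1/n)\sum_{k=0}^{n-1} \E \log Z_k$.

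From \eqref{e:markz}, $\log Z_k = \log(\vc v_{\hat X_k})_{\varphi_k} - \log(\vc v_{\hat X_k})_{\varphi_{k+1}}$, so the resulting sum decomposes into two Cesaro averages of bounded functions along the Markov chains $\{Y_k^{(1)}=(\hat X_k,\varphi_k)\}$ and $\{Y_k^{(2)}=(\hat X_k,\varphi_{k+1})\}$. Both chains are ergodic because $N_\ell = \int_0^\infty c_\ell e^{-c_\ell t} e^{\Theta_\ell t}\,dt$ is strictly positive, so $P^{(1)}$ and $P^{(2)}$ inherit irreducibility from $\hat P$. A short algebraic step shows $\vc\pi^{(2)} = \vc\pi^{(1)} N$: setting $\vc\mu = \vc\pi^{(1)} N$, the stationarity of $\vc\pi^{(1)}$ for $P^{(1)} = N(\hat P \otimes I_r)$ gives $\vc\mu(\hat P \otimes I_r) = \vc\pi^{(1)}$, hence $\vc\mu P^{(2)} = \vc\mu$, and $\vc\mu\vc 1 = 1$ because $N$ is stochastic. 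The ergodic theorem then delivers the two limits $\vc\pi^{(1)} \log \bar{\vc v}$ and $\vc\pi^{(1)} N \log \bar{\vc v}$, so $\Psi \geq \vc\pi^{(1)}(I-N)\log\bar{\vc v}$, and substitution into \eqref{e:omegC} produces exactly $\omega_L$.

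The main obstacle I anticipate is the bookkeeping surrounding the interchange of limit and expectation that passes from the almost-sure form of $\Psi$ to its $\E \log$ form; this parallels the Fatou/Jensen manipulations used for the upper bound and should be routine given the boundedness of the marks $Z_k$. The identification $\vc\pi^{(2)} = \vc\pi^{(1)} N$ is a one-line algebraic check and records the fact that $Y^{(2)}$ is obtained from $Y^{(1)}$ by one step of the intra-epoch dynamics encoded in $N$.
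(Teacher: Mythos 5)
Your proof is correct, but it executes the ``add information'' idea in a genuinely different way from the paper, and the differences are worth recording. The paper conditions explicitly on the whole dual path $\vc\varphi^{(n)}$, observes that the conditional expectation of $\zz_{n,j}$ telescopes into a product of powers of the entries of the $\vc v_\ell$ weighted by occupation counts of the chains $Y^{(1)}_k=(\hat X_k,\varphi_k)$ and $Y^{(2)}_k=(\hat X_k,\varphi_{k+1})$, computes the almost-sure limit of its $n$th root by the strong law applied to those occupation frequencies, and then needs Jensen's inequality followed by Fatou's lemma to compare $e^{\Psi}$ with the expectation of that limit. You instead drop the indicator (legitimate, since \eqref{psi3} holds for every $j$ and the sum over $j$ changes the quantity by at most a factor $r$), pass to the expected-logarithm form of $\Psi$ by bounded convergence --- here you should spell out the uniform bound: $\frac{1}{n}\log\E[Z_0\cdots Z_{n-1}\mid\varphi_0=i,\vc\xi^{(n)},\hat{\vc X}^{(n)}]$ lies in a fixed interval because each factor $\Delta_\ell e^{\Theta_\ell\xi}\Delta_\ell^{-1}$ has row sums between $\min_a(\vc v_\ell)_a/\max_b(\vc v_\ell)_b$ and $\max_a(\vc v_\ell)_a/\min_b(\vc v_\ell)_b$, the matrix $e^{\Theta_\ell\xi}$ being stochastic --- and then apply conditional Jensen ($\E\log\le\log\E$) once, reducing everything to Cesaro averages of $\E\log Z_k$, which converge by Cesaro convergence of the marginals of the two irreducible finite chains (so periodicity is not an issue and no almost-sure ergodic argument or Fatou step is needed). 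Your route is shorter and more elementary; what the paper's route buys is the explicit ``conditioning on $\vc\varphi^{(n)}$'' formulation, which is the conceptual thread of the paper and is precisely what gets refined in Section~\ref{s:improve}, where conditioning on only part of the dual path yields the larger bound $\omega_L^*$. Both arguments share the essential algebraic ingredients: the transition matrices $P^{(1)}=N(\hat P\otimes I_r)$ and $P^{(2)}=(\hat P\otimes I_r)N$, and the relation $\vc\pi^{(2)}=\vc\pi^{(1)}N$, which you verify exactly as the paper does.
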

\begin{proof}
We start from the conditional expectation of the product of marks,
given $\vc\varphi^{(n)} = (\varphi_1, \ldots, \varphi_{n-1})$ in addition to $\vc\xi^{(n)}$
and $\hat{\vc X}^{(n)}$.   We readily find from (\ref{e:markz}) that 
\begin{eqnarray*} \lefteqn{
    \E[ \zz_{n,j} \,|\,\varphi_0=i,\vc\varphi^{(n)},
    \vc\xi^{(n)}, \hat{\vc X}^{(n)}]}  \\
  &=& \dfrac{(\vc v_{\hat{X}_0})_{i}}{(\vc
    v_{\hat{X}_0})_{\varphi_{1}}}\,\dfrac{(\vc
    v_{\hat{X}_1})_{\varphi_1}}{(\vc
    v_{\hat{X}_1})_{\varphi_{2}}}\,\cdots \,\dfrac{(\vc
    v_{\hat{X}_{n-2}})_{\varphi_{n-2}}}{(\vc
    v_{\hat{X}_{n-2}})_{\varphi_{n-1}}}\,\dfrac{(\vc
    v_{\hat{X}_{n-1}})_{\varphi_{n-1}}}{(\vc v_{\hat{X}_{n-1}})_{j}}
  \left(e^{\hat\Theta_{n-1}\xi_{n-1}}\right)_{\varphi_{n-1},j}
  \\
 & = & 
\prod_{\ell=1}^m \prod_{a=1}^r (\vc
v_\ell)_a^{n^{(1)}_{(\ell,a)}-n^{(2)}_{(\ell,a)}}
\dfrac{(\vc v_{\hat{X}_0})_{i}}{(\vc
  v_{\hat{X}_{n-1}})_{j}}\left(e^{\hat\Theta_{n-1}\xi_{n-1}}\right)_{\varphi_{n-1},j},
\end{eqnarray*} 
where 
\[
n^{(1)}_{(\ell,a)}=\sum_{k=1}^{n-1}
\mathds{1}_{\{Y_k^{(1)}= (\ell,a)\}}
\qquad \mbox{and} \qquad 
n^{(2)}_{(\ell,a)}=\sum_{k=0}^{n-2}
\mathds{1}_{\{Y_k^{(2)}=(\ell,a)\}}.
\]
Therefore,
\begin{align}
  \nonumber \lim_{n \rightarrow \infty} &
  \E[ \zz_{n,j} \,|\,\varphi_0=i,\vc\varphi^{(n)},
  \vc\xi^{(n)}, \hat{\vc X}^{(n)}]^{1/n} 
\\ \nonumber 
&=
  \lim_{n\rightarrow \infty} \prod_{\ell=1}^m \prod_{a=1}^r (\vc
  v_\ell)_a^{\frac{n^{(1)}_{(\ell,a)}-n^{(2)}_{(\ell,a)}}{n}}
  \left(\dfrac{(\vc v_{\hat{X}_0})_{i}}{(\vc
      v_{\hat{X}_{n-1}})_{j}}\left(e^{\hat\Theta_{n-1}\xi_{n-1}}\right)_{\varphi_{n-1},j}\right)^{1/n}
\\   \label{prod}
    &=
 \prod_{\ell=1}^m \prod_{a=1}^r (\vc
 v_\ell)_a^{\pi^{(1)}_{(\ell,a)}-\pi^{(2)}_{(\ell,a)}}
\qquad \mbox{a.s.}
\end{align}
where $\vc \pi^{(1)}$ and $\vc\pi^{(2)}$ are the
stationary distribution vectors of the stochastic matrices $P^{(1)}$
and $P^{(2)}$.
Now,
\begin{align}
   \nonumber
e^{\Psi} & = 
\lim_{n\rightarrow\infty}
\E[ \zz_{n,j} \,|\,\varphi_0=i,
  \vc\xi^{(n)}, \hat{\vc X}^{(n)}]^{1/n}
\\
   \nonumber
&= 
\lim_{n\rightarrow\infty} \E_\varphi \left[
  \E[ \zz_{n,j} \,|\,\varphi_0=i,\vc\varphi^{(n)},
  \vc\xi^{(n)}, \hat{\vc X}^{(n)}]\right]^{1/n},
\intertext{where the outermost expectation is with respect to
  ${\vc\varphi}^{(n)}$,}
   \nonumber
& \geq \lim_{n\rightarrow\infty} \E_\varphi \left[
 \E[ \zz_{n,j} \,|\,\varphi_0=i,\vc\varphi^{(n)},
  \vc\xi^{(n)}, \hat{\vc X}^{(n)}]^{1/n}\right]
\\
   \label{e:espz}
& \geq \E_\varphi \left[\lim_{n\rightarrow \infty}
  \E[ \zz_{n,j} \,|\,\varphi_0=i,\vc\varphi^{(n)},
  \vc\xi^{(n)}, \hat{\vc X}^{(n)}]^{1/n}\right]
\end{align}
where we use Jensen's Inequality followed by Fatou's Lemma.

From (\ref{e:omegC}),  (\ref{prod}) and (\ref{e:espz}) , we find that
\[
\omega_L= \hat{\vc\pi} C^{-1} \vc\lambda + (\vc\pi^{(1)} - \vc\pi^{(2)}) \log \bar{\vc v}.   
\]
Finally, it is easy to verify that $\vc\pi^{(2)} = \vc\pi^{(1)} N$,
which concludes the proof.
\end{proof}

\section{Alternative dual process}\label{2du}

Instead of using the right-eigenvectors 
of the matrices $\Omega_\ell$
like in (\ref{theta}),
one may set up another dual process, starting from the
left-eigenvectors.
Define $\Delta'_\ell = \diag(\vc u_\ell)$ and 
\[
\Theta'_\ell =(\Delta'_\ell)^{-1}\,(\Omega^*_\ell)^T\,\Delta'_\ell
\qquad \mbox{for $1\leq \ell\leq m$.}
\]
With these, one shows that 
\begin{eqnarray}
\nonumber
\Psi&=&\lim_{n \rightarrow \infty} \frac{1}{n} \log
\left\{\bar\Delta_{n-1} \, e^{\bar\Theta_{n-1}
      \xi_{n-1}} \, \bar\Delta_{n-1}^{-1}\,
    \bar\Delta_{n-2} \, e^{\bar\Theta_{n-2}
      \xi_{n-2}} \, \bar\Delta_{n-2}^{-1}\,\cdots\right. 
\\
   \label{e:psialt}
&&\left.\quad\phantom{\lim_{n \rightarrow \infty} \frac{1}{n} \log}
  \,\bar\Delta_{0} \, e^{\bar\Theta_{0} \xi_{0}} \, 
   \bar\Delta_{0}^{-1}\right\}_{ji},   \qquad \mbox{a.s.}
\end{eqnarray} 
where $\bar\Delta_k = \Delta'_{\hat X_k}$ and
$\bar\Theta_k = \Theta'_{\hat X_k}$.  
We define $\{\widetilde X_k\}$ as the \emph{time-reversed} version of
the environmental jump chain, with transition matrix 
\[
\widetilde P = \diag(\hat{\vc\pi})^{-1}  \hat P^T \diag(\hat{\vc\pi}),
\]
and we rewrite (\ref{e:psialt}) as
\begin{eqnarray}
   \label{e:psialtB}
\lefteqn{\Psi =} \\
 &&
  \nonumber
\lim_{n \rightarrow \infty} \frac{1}{n} \log
\left\{\widetilde\Delta_0 \, e^{\widetilde\Theta_0
      \eta_0} \, \widetilde\Delta_0^{-1}\,
    \widetilde\Delta_1 \, e^{\widetilde\Theta_1
      \eta_1} \, \widetilde\Delta_1^{-1}\,\cdots
  \,\widetilde\Delta_{n-1} \, e^{\widetilde\Theta_{n-1} \eta_{n-1}} \, 
   \widetilde\Delta_{n-1}^{-1}\right\}_{ji}, 
\end{eqnarray} 
with probability one, 
where $\widetilde\Delta_k = \Delta'_{\widetilde X_k}$, 
$\widetilde\Theta_k = \Theta'_{\widetilde X_k}$ and $\eta_k$ is
exponential with parameter $c_{\widetilde X_k}$.  

We follow the same steps as in Section \ref{s:dual} and obtain a new
lower bound; we omit the proof here.

\begin{theo}
   \label{t:boundsB}
An alternative lower bound for $\omega$ is given by
\[
\widetilde \omega_L = \hat{\vc\pi} C^{-1} \vc\lambda + \widetilde{\vc\pi}^{(1)} (I-
\widetilde N) \log \bar{\vc u}^T,
\]
where 
\begin{align*}
\widetilde  N & = \begin{bmatrix}
\widetilde  N_1 \\ & \widetilde  N_2 \\ & & \ddots \\ & & &\widetilde  N_m
\end{bmatrix},
\\
\widetilde  N_\ell  & = c_\ell(c_\ell I-\Theta'_\ell)^{-1},
\end{align*}
the vector $\widetilde {\vc\pi}^{(1)}$ is the stationary probability vector of 
$
\widetilde  P^{(1)}=\widetilde  N(\widetilde {P}\otimes I_r)
$
and 
\[
\bar{\vc u} = \begin{bmatrix}  \vc u_1 & \vc u_2 & \ldots &\vc u_m \end{bmatrix}.
\]
\qed
\end{theo}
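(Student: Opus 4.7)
The plan is to mirror the argument of Theorem \ref{t:omegal}, but starting from the time-reversed representation (\ref{e:psialtB}) of $\Psi$, so that all reasoning now takes place with respect to the time-reversed jump chain $\{\widetilde X_k\}$ and the dual process driven by the generators $\{\Theta'_\ell\}$. Note that $\Theta'_\ell$ is indeed a generator: since $\vc u_\ell$ is a left-eigenvector of $\Omega^*_\ell$ for the eigenvalue $0$, one has $\Theta'_\ell\vc 1 = (\Delta'_\ell)^{-1}(\Omega^*_\ell)^T\vc u_\ell^T = 0$, so the row sums of $\Theta'_\ell$ vanish.

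First I would attach to each interval between jumps of $\{\widetilde X_k\}$ a mark
\[
\widetilde Z_k = (\widetilde\Delta_k)_{\widetilde\varphi_k}(\widetilde\Delta_k^{-1})_{\widetilde\varphi_{k+1}} = \dfrac{(\vc u_{\widetilde X_k})_{\widetilde\varphi_k}}{(\vc u_{\widetilde X_k})_{\widetilde\varphi_{k+1}}},
\]
where $\{\widetilde\varphi_k\}$ is the discrete-time process embedded at the jumps of $\{\widetilde X_k\}$ and governed between jumps by $\Theta'_{\widetilde X_k}$. Verbatim repetition of Lemma \ref{t:markproduct} turns (\ref{e:psialtB}) into
\[
\Psi = \lim_{n\to\infty}\frac{1}{n}\log\E[\widetilde Z_0 \widetilde Z_1\cdots \widetilde Z_{n-1}\,\mathds{1}_{\{\widetilde\varphi_n=i\}}\,|\,\widetilde\varphi_0=j,\vc\eta^{(n)},\widetilde{\vc X}^{(n)}]
\]
almost surely. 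The first term $\hat{\vc\pi}C^{-1}\vc\lambda$ of $\omega$ is unchanged, because the time-reversed jump chain has the same stationary distribution $\hat{\vc\pi}$ and the same exponential-sojourn parameters $c_\ell$ as $\{\hat X_k\}$.

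Next I would introduce the two embedded Markov chains $\widetilde Y_k^{(1)}=(\widetilde X_k,\widetilde\varphi_k)$ and $\widetilde Y_k^{(2)}=(\widetilde X_k,\widetilde\varphi_{k+1})$, whose transition matrices, computed exactly as at the beginning of Section \ref{s:lower}, are $\widetilde P^{(1)}=\widetilde N(\widetilde P\otimes I_r)$ and $\widetilde P^{(2)}=(\widetilde P\otimes I_r)\widetilde N$, with stationary vectors $\widetilde{\vc\pi}^{(1)}$ and $\widetilde{\vc\pi}^{(2)}$. Conditioning additionally on $\widetilde{\vc\varphi}^{(n)}$, the product of marks rearranges as
\[
\prod_{\ell=1}^m\prod_{a=1}^r (\vc u_\ell)_a^{\,n^{(1)}_{(\ell,a)}-n^{(2)}_{(\ell,a)}}
\]
up to boundary factors whose $1/n$-powers tend to $1$, and the Strong Law of Large Numbers applied to the ergodic chains $\{\widetilde Y_k^{(j)}\}$ gives the almost sure limit $\prod_\ell\prod_a(\vc u_\ell)_a^{\widetilde\pi^{(1)}_{(\ell,a)}-\widetilde\pi^{(2)}_{(\ell,a)}}$.

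The final step reproduces the estimates in (\ref{e:espz}): one application of Jensen's inequality for the concave map $x\mapsto x^{1/n}$ and one of Fatou's lemma yield $\Psi\geq(\widetilde{\vc\pi}^{(1)}-\widetilde{\vc\pi}^{(2)})\log\bar{\vc u}^T$, and the identity $\widetilde{\vc\pi}^{(2)}=\widetilde{\vc\pi}^{(1)}\widetilde N$ (verified exactly as at the end of the proof of Theorem \ref{t:omegal}) produces the form of $\widetilde\omega_L$ in the statement. The only genuinely delicate point is bookkeeping: one has to check that the transposition hidden in the definition of $\Theta'_\ell$, combined with the reversal of the order of factors when (\ref{e:psialt}) is rewritten as (\ref{e:psialtB}) via time reversal, is precisely what is needed for $\{\widetilde X_k\}$ to play the role of $\{\hat X_k\}$ from Section \ref{s:lower}; once this correspondence is in place, the analytic steps carry over without change.
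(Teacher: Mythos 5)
Your proposal is correct and is essentially the proof the paper has in mind: the paper omits the argument precisely because it is a step-by-step repetition of Lemma~\ref{t:markproduct} and Theorem~\ref{t:omegal} applied to the representation (\ref{e:psialtB}), with marks built from the left-eigenvectors $\vc u_\ell$, the embedded chains $\widetilde Y_k^{(1)},\widetilde Y_k^{(2)}$ with transition matrices $\widetilde N(\widetilde P\otimes I_r)$ and $(\widetilde P\otimes I_r)\widetilde N$, and the same Jensen--Fatou--SLLN chain of estimates leading to $\Psi\geq(\widetilde{\vc\pi}^{(1)}-\widetilde{\vc\pi}^{(2)})\log\bar{\vc u}^T$ and $\widetilde{\vc\pi}^{(2)}=\widetilde{\vc\pi}^{(1)}\widetilde N$. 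Your bookkeeping checks (row sums of $\Theta'_\ell$ vanish; the transpose in $\Theta'_\ell$ compensates the reversal of factors so that $\{\widetilde X_k\}$ plays the role of $\{\hat X_k\}$; the first term $\hat{\vc\pi}C^{-1}\vc\lambda$ is unaffected) are exactly the points that need verifying, and they are handled correctly.
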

If we repeat from (\ref{e:psialtB}) the argument from
Theorem~\ref{t:omegaU}, we find that $\omega \leq \widetilde
\omega_U$, with
\[
\widetilde \omega_U = \hat{\vc\pi} C^{-1} \vc\lambda + \log \mathrm{sp}[M^T
(\widetilde{P}\otimes I_r)].
\]
This is not a new upper bound, however, as $\omega_U = \widetilde
\omega_U$.  Indeed,
\begin{align*}
\sp [M^T (\widetilde P \otimes I_r)] 
& =  \sp [M^T (\Delta_\pi^{-1} \otimes I_r ) (\hat P^T \otimes
I_r) (\Delta_\pi \otimes I_r )] , \quad \mbox{with $\Delta_\pi =
  \diag(\hat{\vc\pi})$} 
\\
 & = \sp [(\Delta_\pi \otimes I_r )  M^T (\Delta_\pi^{-1} \otimes I_r ) (\hat P^T \otimes
I_r) ] , 
\end{align*}
as the spectral radius of a product of matrices is invariant under a
cyclic permutation of the factors.  The first three factors are
block-diagonal matrices and a simple calculation shows that they
commute, so that 
\begin{align*}
  \sp [M^T (\widetilde P \otimes I_r)] & = \sp [ M^T (\hat P^T
  \otimes I_r) ] \\
 & = \sp [(\hat P \otimes I_r) M] = \sp[M (\hat P\otimes I_r)].
\end{align*}
The lower bounds, on the other hand, are obtained from different vectors, and
numerical experimentation has shown that they may indeed be very
different, without one being generally closer to $\omega$.  This is
illustrated in the next section.

\section{Numerical examples} 
   \label{s:examples}
   In this section, we illustrate our bounds with help of two
   examples. The value of $\omega$ as defined in \eqref{mp} is
   approximated by simulating $2000$ paths of the branching process
   for $n$ environmental transitions, with $n$ taking values up to
   $2000$; we denote this approximation by $\omega_{sim}$.

\begin{ex}[A two-state random environment]\label{ex1}
  In our first example, $m=2$ and the generator of $\{X(t)\}$ is
\[
Q= \left[\begin{array}{cc} -5  &  5\\
     2   &-2\end{array}\right].
\]
Its stationary
 distribution is $\vc\pi=[0.2857,0.7143]$. 
 We take  $r=2$  and
\begin{equation}
  \label{omega1}
\Omega_{1}=\left[\begin{array}{cc} -15  &  12\\
     9   &-29\end{array}\right],\quad  \Omega_{2}=\left[\begin{array}{cc} -13 &   16\\
     23  & -12\end{array}\right].
\end{equation}
 The dominant eigenvalues of $\Omega_{1}$ and $\Omega_{2}$ are
 $\lambda_1=-9.47$ and $\lambda_2=6.69$ respectively, so the branching
 process is subcritical in state 1 and supercritical in state 2.   As
 $\pi_2 > \pi_1$, we expect the whole process to be supercritical and
 this is confirmed by our results, summarised in the table below and
 in Figure~\ref{fi1}.
 
      \begin{figure}[t] \begin{center}\includegraphics[angle=0,
 width=10cm]{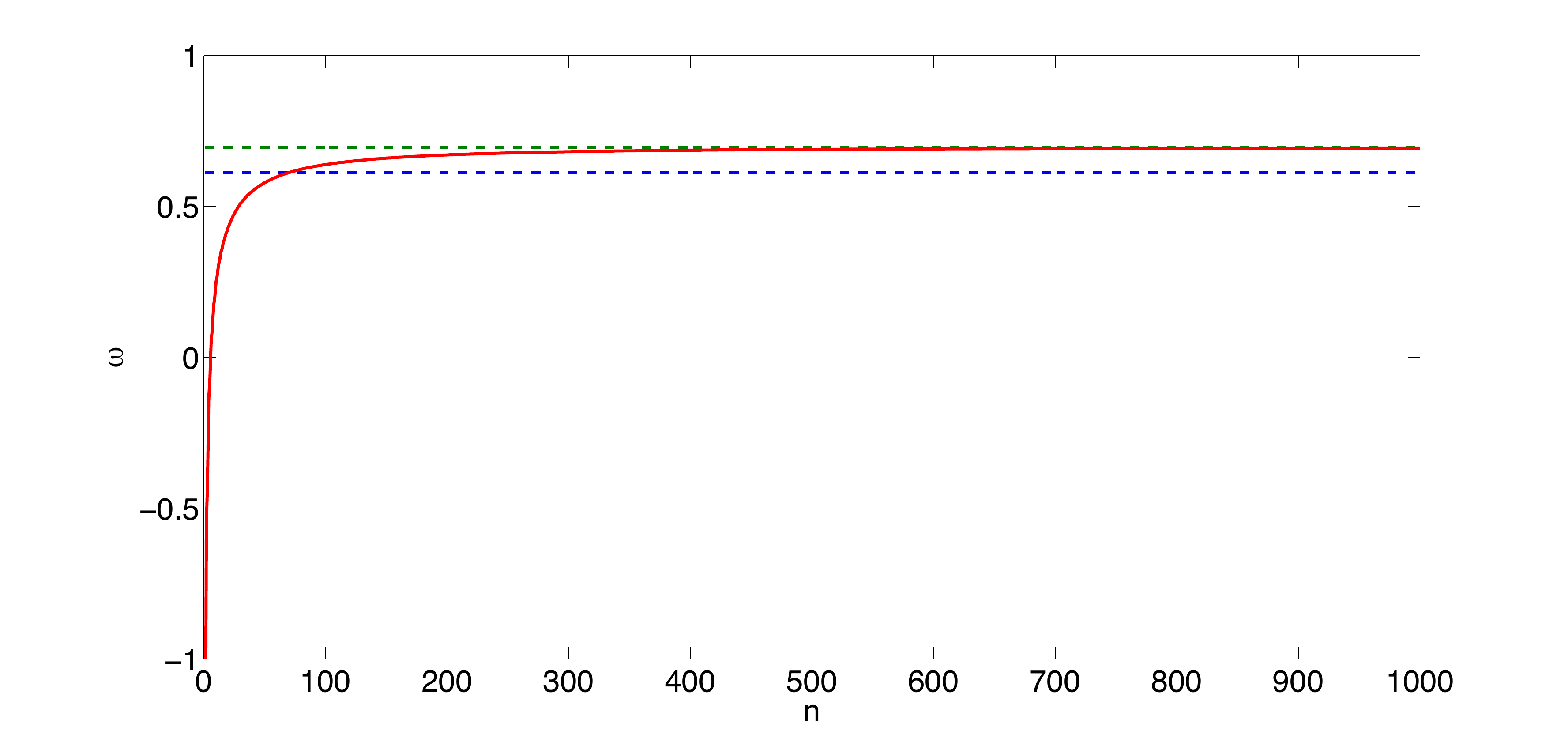}\end{center} \caption{\label{fi1}Plain line: simulations of $\omega$ (Cesaro sequence) for successive values of $n$; lower dashed line: $\omega_L$; upper dashed line: $\omega_U$.}
 \end{figure}\end{ex}

 \begin{center}
   \begin{tabular}{cc|c|c}
    $\omega_L $ &  $\widetilde\omega_L$ & $\omega_{sim}$ & $\omega_U$ \\
\hline
  0.6107 & 0.6867 & 0.6933  & 0.6964
   \end{tabular}
 \end{center}
The simulation results are presented
in Figure \ref{fi1}: the plain line is the average, over 2000 simulated
paths, of the right-hand side of (\ref{mp}), one does see its
convergence as $n$ increases.  The upper and lower dashed lines are
for $\omega_U$ and $\omega_L$.  Clearly, $\omega$ is positive.  For
this example, $\omega_L $ is not very good and $\widetilde\omega_L
$ gives a better lower bound.

 \begin{ex}[A three-state random environment]\label{ex2}
 
 We add a third state to the random environment and we assume the following generator
 \[
Q= \left[\begin{array}{ccc} -4 &2&2\\
    1&-1&0\\
    2&4&-6\end{array}\right],
\]
with stationary distribution $\vc\pi=[0.2143,0.7143,0.0714]$.  The matrices
   $\Omega_{1}$ and $\Omega_2$ are the same as in Example \ref{ex1},
   and the third is
\begin{equation}
   \label{omega3}
 \Omega_3=\left[\begin{array}{cc} -39   & 12\\3 &
     -17\end{array}\right]
\end{equation}
with $\lambda_3=-15.47$, so the branching process is very subcritical
in the new environmental state.   In this case, we obtain the
following values.
 \begin{center}
   \begin{tabular}{cc|c|c}
    $\widetilde\omega_L $ &  $\omega_L$ & $\omega_{sim}$ & $\omega_U$ \\
\hline
 0.5725 & 0.6442 & 0.7181 & 0.7475
   \end{tabular}
 \end{center}
We represent the bounds and the simulation in Figure \ref{fi2}. 
For this example, the better lower bound is $\omega_L$.
    
    \begin{figure}[t] \begin{center}\includegraphics[angle=0,
 width=10cm]{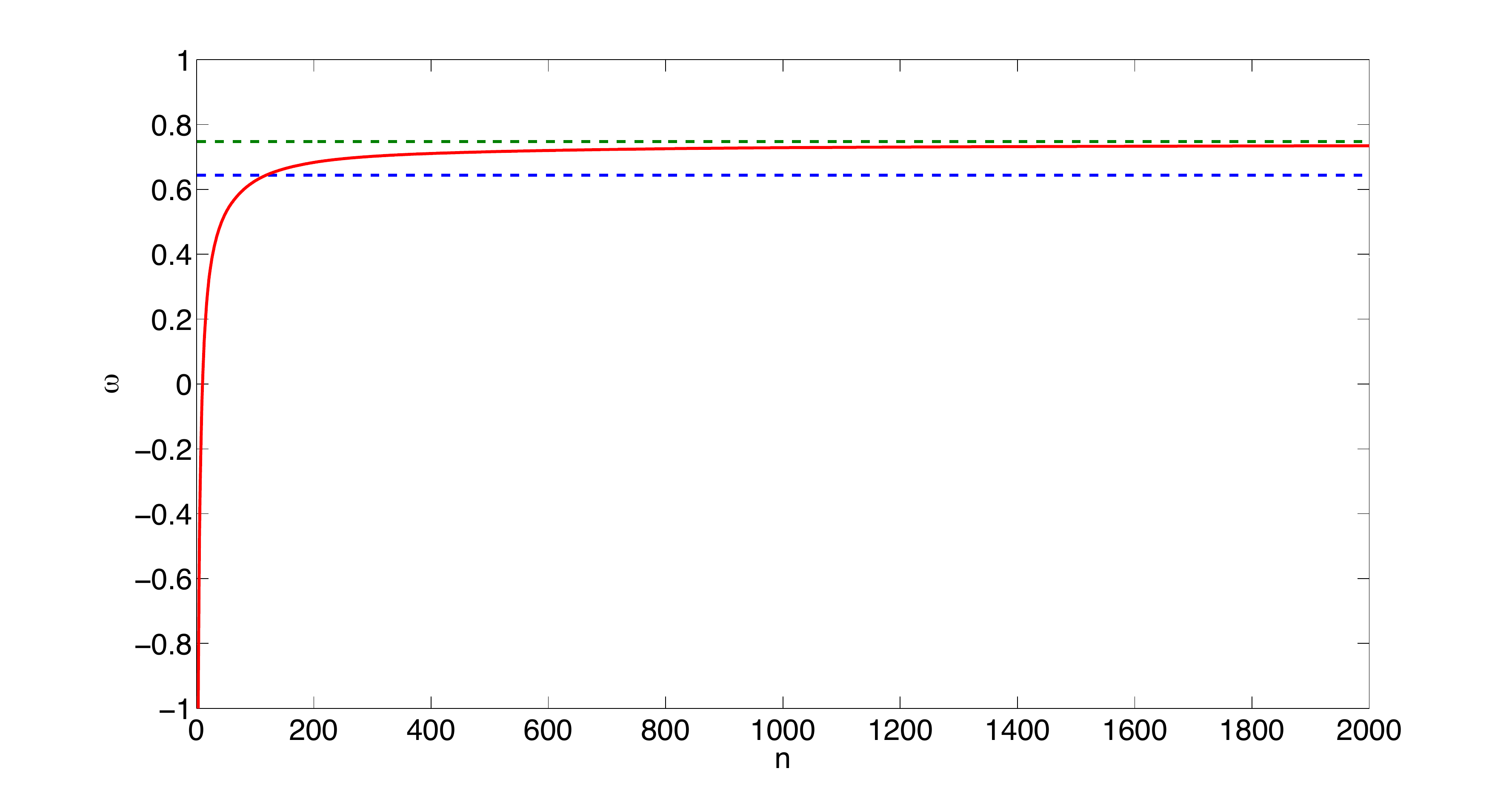}\end{center} \caption{\label{fi2}Plain line: simulations of $\omega$ (Cesaro sequence) for successive values of $n$; lower dashed line: $\omega_L$; upper dashed line: $\omega_U$.}
 \end{figure}\end{ex}
 
The examples have been chosen so that the environment spends
asymptotically the same amount of time in the supercritical state 2
(the stationary probability is $\pi_2=0.7143$ in both cases).  In
addition, state 3 is more subcritical than state 1.  Nevertheless,
$\omega$ is greater, the branching process is overall more
supercritical, in the second example.  It is worth mentioning that we
have generally observed in our experimentation that the upper bound is
more accurate than the lower bounds.

\section{Adding less information}
   \label{s:improve}

The lower bound $\omega_L$ is obtained by adding information in the
form of  \textit{all} successive states of the embedded dual process
$\{\varphi_k\}$.
In general, it seems that $\omega_L$ is not a very close bound for
$\omega$, not as close as $\omega_U$ at any rate, and neither is
$\widetilde\omega_L$.   We attempt, therefore, to obtain a better
lower bound by adding less information.  This is doable
in the special case of a cyclic environmental process.

Assume that the jump chain $\{\hat X_k\}$ follows a deterministic
cyclic path, and assume, without loss of generality, that $\hat X_k =k\mod m +1$.  In this case, (\ref{psi3}) is equivalent to
\begin{align}
   \nonumber
\Psi & = \lim_{n \rightarrow \infty} \frac{1}{n} 
\log \E[ \zz_{n,j} \,
   |\,\varphi_{0}=i,\vc\xi^{(n)}]  \qquad \mbox{a.s.} \\
   \label{e:psiNrho}
 & = \lim_{n \rightarrow \infty} \frac{1}{nm} 
\log \E_\rho [ \E [ \zz_{nm,j} \,
   |\,\varphi_{0}=i,\vc\xi^{(nm)}, \vc\rho^{(n)}] ]  \qquad \mbox{a.s.} 
\end{align}
where the outermost expectation is with respect to $\vc\rho^{(n)}$ defined as $\vc\rho^{(n)} =  
(\varphi_m, \ldots, \varphi_{(n-1)m})$.  That
is, in contrast with Theorem \ref{t:omegal}, we do not condition on
the whole sequence of states of the dual at the epochs $\{\tau_k\}$ but only at
the beginning of cycles for $\{\hat X_k\}$.

We redefine as follows the products $\zz_{n,j}$ in (\ref{e:markz},
\ref{e:zz}):
\[
\zz_{nm,j} = v_{m, \varphi_0} R_0 R_1 \cdots R_{n-1}
v_{m,\varphi_{nm}}^{-1}  \mathds{1}_{\{\varphi_{nm}=j\}},
\]
where
\begin{equation}
   \label{e:Rk}
R_k = \frac{
v_{1,\varphi_{km}} v_{2,\varphi_{km+1}}  \cdots  v_{m,\varphi_{km+m-1}}
}{
v_{m,\varphi_{km}} v_{1,\varphi_{km+1}}  \cdots    v_{m-1,\varphi_{km+m-1}}
}
\end{equation}
and we use the fact that $(R_0, R_1, \ldots, R_{k-1})$ is conditionally
independent of $(R_k, R_{k+1}, \ldots)$, given $\varphi_{km}$.  Thus,
\begin{align}\nonumber
\E [ \zz_{nm,j} \, 
   |\,\varphi_{0}=i, &\vc\xi^{(nm)}, \vc\rho^{(n)}]  \\\nonumber
& = v_{m,i} \prod_{k=0}^{n-2} \E[R_k | \,\varphi_{0}=i,\vc\xi^{(nm)},
\vc\rho^{(n)} ]   
\\ \nonumber
& \qquad \E[ R_{n-1}  v_{m,\varphi_{nm}}^{-1}
\mathds{1}_{\{\varphi_{nm}=j\}} | \,\varphi_{0}=i,\vc\xi^{(nm)}, \vc\rho^{(n)}]
\intertext{which we rewrite as}\nonumber
& = v_{m,i}   \E[R_0 | \,\varphi_{0}=i, \vc\zeta_0, \varphi_m ]     
  \prod_{k=1}^{n-2} \E[R_k | \,\varphi_{km}, \varphi_{(k+1)m}, \vc\zeta_k
  ]   \\\nonumber
 & \qquad \E[ R_{n-1}  v_{m,\varphi_{nm}}^{-1} 
\mathds{1}_{\{\varphi_{nm}=j\}} | \,\varphi_{(n-1)m}, \vc\zeta_{n-1}]
\intertext{with $\vc\zeta_k = (\xi_{km}, \xi_{km+1}, \ldots,
  \xi_{km+m-1}$)}
  & = f \prod_{k=1}^{n-2} \E[R_k | \,\varphi_{km}, \varphi_{(k+1)m},
  \vc\zeta_k ]\label{prout}
\end{align}
where we collect in 
\[
f = v_{m,i}   \E[R_0 | \,\varphi_{0}=i, \vc\zeta_0, \varphi_m ]     \E[ R_{n-1}  v_{m,\varphi_{nm}}^{-1} 
\mathds{1}_{\{\varphi_{nm}=j\}} | \,\varphi_{(n-1)m}, \vc\zeta_{n-1}]
\]
all the factors which play no role in the limit as $n \rightarrow
\infty$.   Observe that $\vc\zeta_0$, $\vc\zeta_1$, \ldots  \ are independent and identically distributed random
$m$-tuples, with the same distribution as $\vc\zeta^* = (\xi_1^*,
\xi_2^*, \ldots, \xi_m^*)$, where $\xi_1^*$, $\xi_2^*$, \ldots,
$\xi_m^*$ are independent, exponentially distributed random variables,
respectively with parameters $c_1$, $c_2$, \ldots, $c_m$.  Thus, from \eqref{e:psiNrho} and \eqref{prout},
\begin{align}
   \nonumber
e^\Psi & = \lim_{n \rightarrow \infty} \{  \E_\rho[
 f \prod_{k=1}^{n-2} \E[R_k | \,\varphi_{km}, \varphi_{(k+1)m},
  \vc\zeta_k ]   ] \}^{1/nm}
\\
   \nonumber
 & \geq \E_\rho[ \lim_{n \rightarrow \infty} \{  
 f \prod_{k=1}^{n-2} \E[R_k | \,\varphi_{km}, \varphi_{(k+1)m},
  \vc\zeta_k ]   \}^{1/nm}]
\\
   \label{e:inter}
& = \E_\rho[ \lim_{n \rightarrow \infty} \{  
 \prod_{k=1}^{n-2} \E[R_k | \,\varphi_{km}, \varphi_{(k+1)m},
  \vc\zeta_k ]   \}^{1/nm}];
\end{align}
the inequality is justified by Fatou's lemma and Jensen's inequality.

We may now prove the following property.

\begin{theo}
    \label{t:omegastar}
    A lower bound for $\omega$ is given by
    \begin{equation}
       \label{e:star1}
    \omega_L^* = \hat{\vc\pi} C^{-1} \vc\lambda + \frac{1}{m} \sum_{1 \leq i,j \leq r} \beta_{ij} \E[\log \E[R_0| \varphi_0=i, \varphi_m=j, \vc\zeta^*]],
    \end{equation}
    where 
    \begin{equation}
    \label{e:beta}
    \beta_{ij} = \alpha_i (N_1 N_2 \cdots N_m)_{ij},
    \end{equation}
the matrices $N_\ell$ are defined in (\ref{Ni}) and $\vc\alpha$ is the stationary vector of their product:
\begin{equation}
  \label{e:alpha}
  \vc\alpha N_1 N_2 \cdots N_m = \vc\alpha, \qquad \vc\alpha \vone = 1.
\end{equation}
\end{theo}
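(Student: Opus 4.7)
The strategy is to start from the bound \eqref{e:inter} and evaluate its right-hand side by applying an ergodic theorem to the sequence embedded at cycle boundaries. Writing the product as the exponential of a Cesaro sum, the task becomes identifying the almost-sure limit of
\[
\frac{1}{nm}\sum_{k=1}^{n-2}\log\E[R_k\mid\varphi_{km},\varphi_{(k+1)m},\vc\zeta_k].
\]

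The key observation is that, because the jump chain is deterministic cyclic, the sojourn vectors $\vc\zeta_k$ are i.i.d.\ copies of $\vc\zeta^*$, and the embedded process $\{\varphi_{km}\}_{k\geq 0}$ is itself a Markov chain on $\{1,\ldots,r\}$.  Its one-step kernel, obtained by integrating one full cycle $e^{\Theta_1\xi_1^*}\cdots e^{\Theta_m\xi_m^*}$ against the independent exponential sojourns, is exactly $N_1N_2\cdots N_m$.  The stationary distribution of this chain is the vector $\vc\alpha$ defined in \eqref{e:alpha}, so initialising with $\varphi_0\sim\vc\alpha$ makes $\{(\varphi_{km},\varphi_{(k+1)m},\vc\zeta_k)\}_{k\geq 0}$ a stationary ergodic sequence.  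Since $R_k$ is a ratio of entries of strictly positive eigenvectors $\vc v_\ell$ it lies between deterministic positive constants, so integrability of $\log\E[R_0\mid\cdots]$ is automatic.

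Birkhoff's ergodic theorem then yields
\[
\lim_{n\to\infty}\frac{1}{n}\sum_{k=1}^{n-2}\log\E[R_k\mid\varphi_{km},\varphi_{(k+1)m},\vc\zeta_k] \;=\; \E\bigl[\log\E[R_0\mid\varphi_0,\varphi_m,\vc\zeta_0]\bigr]
\]
almost surely, where the outer expectation is taken under the stationary joint law of $(\varphi_0,\varphi_m,\vc\zeta_0)$.  Because the limit is an almost-sure constant, the outer expectation $\E_\rho$ in \eqref{e:inter} factors out trivially; dividing by $m$ and decomposing by conditioning on $(\varphi_0,\varphi_m)=(i,j)$, whose stationary probability is exactly $\alpha_i(N_1\cdots N_m)_{ij}=\beta_{ij}$, one collects the residual integration over $\vc\zeta_0$ into $\E[\log\E[R_0\mid\varphi_0=i,\varphi_m=j,\vc\zeta^*]]$.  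Adding the drift $\hat{\vc\pi}C^{-1}\vc\lambda$ from \eqref{e:omegC} produces $\omega_L^*$.

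The main technical point I expect to contend with is the conditional law of $\vc\zeta_0$ given $(\varphi_0,\varphi_m)=(i,j)$ under stationarity: this is not the unbiased product-exponential law of $\vc\zeta^*$ but is tilted by the factor $P(\varphi_m=j\mid\varphi_0=i,\vc\zeta_0=\cdot)/(N_1\cdots N_m)_{ij}$, so the symbol $\vc\zeta^*$ inside \eqref{e:star1} must be read consistently with this tilted law, or an auxiliary step must justify replacing it by the marginal one.  Getting this decomposition precisely right, and formally justifying that the inner limit in \eqref{e:inter} may be pulled outside the outer expectation because it is almost surely constant, is the delicate part of the argument.
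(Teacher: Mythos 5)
Your overall route is the paper's own: start from \eqref{e:inter}, use the cyclic structure to view the cycle-embedded dual $\{\varphi_{km}\}$ as a Markov chain whose one-step kernel is $N_1N_2\cdots N_m$ (the $N_\ell$ of \eqref{Ni}), with stationary vector $\vc\alpha$ as in \eqref{e:alpha}, rewrite the product as a Cesaro average of $\log\E[R_k\mid\varphi_{km},\varphi_{(k+1)m},\vc\zeta_k]$, and identify the pair frequencies with $\beta_{ij}$ of \eqref{e:beta}. Up to that point you and the paper coincide.

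The gap is precisely the point you flag and then leave open. Your Birkhoff limit is the expectation of $\log\E[R_0\mid\varphi_0,\varphi_m,\vc\zeta_0]$ under the stationary joint law of $(\varphi_0,\varphi_m,\vc\zeta_0)$, and when you disintegrate over $(\varphi_0,\varphi_m)=(i,j)$ the residual law of $\vc\zeta_0$ is, as you note, tilted by $\p[\varphi_m=j\mid\varphi_0=i,\vc\zeta_0=\cdot\,]/(N_1\cdots N_m)_{ij}$, not the plain product-exponential law of $\vc\zeta^*$. Since \eqref{e:star1} is stated with the plain law, and since there is no generic inequality allowing you to replace the tilted conditional expectation by the untilted one (the discrepancy is a sum of covariances of indeterminate sign), what your argument actually establishes is a lower bound of the same shape with tilted expectations, not \eqref{e:star1} itself; the ``auxiliary step'' you defer is therefore not cosmetic but is the step that produces the stated formula. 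For comparison, the paper does not carry out this correction: in passing from \eqref{e:inter} to \eqref{e:interb} it regroups the factors by the endpoint pair $(i,j)$ and asserts that, for fixed $(i,j)$, the factors $R_{k;i,j}$ are i.i.d.\ with the same distribution as $\E[R_0\mid\varphi_0=i,\varphi_m=j,\vc\zeta^*]$ with $\vc\zeta^*$ unbiased, and then applies the strong law per pair; that assertion is exactly the replacement you declined to make without justification (selecting the cycles whose endpoints are $(i,j)$ does bias the sojourn vector in those cycles). So to arrive at the published statement you must either adopt the paper's per-pair i.i.d.\ claim or actually supply the justification you allude to; as written, your proposal isolates the delicate step correctly but does not close it, and hence does not yet prove the theorem as stated.
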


\begin{proof}
We reorganise the product in (\ref{e:inter}) and group together the
factors with equal values for $\varphi_{km}$ and $\varphi_{(k+1)m}$,
obtaining that 
\begin{equation}
   \label{e:interb}
e^\Psi \geq \E_\rho[ \lim_{n \rightarrow \infty} ( \prod_{1 \leq i, j
  \leq r} \prod_{k=1}^{n_{ij}}  R_{k;i,j}
)^{1/nm}]  
\end{equation}
where
$n_{ij}=\sum_{k=1}^{n-2}\mathds{1}_{\{\varphi_{m k}=i,\varphi_{m(k+1)}=j
  \}} $ and, for fixed $i$ and $j$, $\{R_{1;i,j}, R_{2;i,j}, \ldots \}$
are i.i.d. random variables with the same distribution as $\E[R_0 |
\varphi_0=i, \varphi_m = j, \vc\zeta^*]$.   

By the Strong Law of Large Numbers, the limits $\beta_{ij} = \lim_{n
  \rightarrow \infty} n_{ij}/n$ exist and
\begin{align*}
\beta_{ij} & = \lim_{k \rightarrow \infty} \p[\varphi_{km} = i,
\varphi_{(k+1)m} =j]
\\
  & = \lim_{k \rightarrow \infty} \p[\varphi_{km} = i]
  \p[\varphi_{m} =j| \varphi_0=i]
\\
  & = \alpha_i  (N_1 N_2 \cdots N_m)_{ij}.
\end{align*} 
Further, by the Strong Law of Large Numbers again, for fixed $i$ and
$j$,
\[
\lim_{n \rightarrow \infty}  (\prod_{k=1}^{n_{ij}} 
R_{k;i,j})^{1/n}
 =
\exp ( \beta_{ij} \E[ \log \E[ R_0 | \varphi_0 =i, \varphi_m=j, \vc\zeta^*]  ] )
\]
so that the limit in (\ref{e:interb}) is independent of
$\vc\rho^{(n)}$
and the inequality becomes  $\Psi \geq \Psi^*$, where
\[
\Psi^*
  =\frac{1}{m}  \sum_{1 \leq i, j \leq r} \beta_{ij} \E[ \log \E[
 R_0 | \varphi_0 =i, \varphi_m=j, \vc\zeta^*]  ],
\]
which proves that (\ref{e:star1}) is a lower bound for $\omega$.

Now, 
\begin{align}
   \nonumber
\Psi^* & \geq \frac{1}{m}  \sum_{1 \leq i, j \leq r} \beta_{ij} \E[ \E[ \log
 R_0 | \varphi_0 =i, \varphi_m=j, \vc\zeta^*]  ]
\intertext{by Jensen's inequality}
   \nonumber
& = \frac{1}{m}  \sum_{1 \leq i, j \leq r} \beta_{ij} \E[ \log
 R_0 | \varphi_0 =i, \varphi_m=j]  
\\
   \label{e:psii}
& = \frac{1}{m}  \sum_{1 \leq i \leq r} \alpha_i \E[ \log
 R_0 | \varphi_0 =i].
\end{align}
From (\ref{e:Rk}) we find that
\begin{align*}
\E[ \log
 R_0  | \varphi_0 =i]  ] 
 & = \E[\log v_{1,i} + \sum_{\ell = 2}^m \log
 v_{\ell, \varphi_{\ell-1}} - \log v_{m,i} - \sum_{\ell = 1}^{m-1}
 \log v_{\ell, \varphi_\ell}]
\\
 & = (\sum_{\ell = 1}^m \prod_{t=1}^{\ell-1} N_t \log \vc v_\ell )_i
      - \log v_{m,i}
      - (\sum_{\ell = 1}^{m-1} \prod_{t=1}^{\ell} N_t \log \vc v_\ell
      )_i
\\
 & = (\sum_{\ell = 1}^m \prod_{t=1}^{\ell-1} N_t (I-N_\ell) \log \vc v_\ell )_i
\end{align*}
and so, by (\ref{e:psii}),
\begin{equation}
   \label{e:omegal2}
\Psi^*  \geq \frac{1}{m} \vc\alpha (\sum_{\ell = 1}^m \prod_{t=1}^{\ell-1} N_t (I-N_\ell) \log \vc v_\ell ).
\end{equation}
Using the cyclic structure of $\hat P$, one shows that the expression $\vc\pi^{(1)} (I-N) \log \bar{\vc v}$ defined in Theorem~\ref{t:omegal} is identical to the right-hand side of (\ref{e:omegal2}).  This proves that $\omega_L^* \geq \omega_L$.
\end{proof}

Denote by $A_{ij}$, $1 \leq i, j \leq r$, the expectations in (\ref{e:star1}):
\[
A_{ij} = \E[\log \E[R_0| \varphi_0=i, \varphi_m=j, \vc\zeta^*]].
\]
These need to be determined for $\omega_L^*$ to be of practical use.    One verifies from first principles that 
\[
\E[R_0| \varphi_0, \varphi_m, \vc\zeta^*] = (\prod_{1 \leq k \leq m} \Delta_k^* e^{\Theta_k \xi_k^*})_{\varphi_0,\varphi_m} / (\prod_{1 \leq k \leq m}  e^{\Theta_k \xi_k^*})_{\varphi_0,\varphi_m}
\]
where $\Delta_1^* = \Delta_1 \Delta_m^{-1}$ and  $\Delta_k^* = \Delta_k \Delta_{k-1}^{-1}$, $k=2, \ldots, m$, so that 
\[
A_{ij} = \E[\log (\prod_{1 \leq k \leq m} \Delta_k^* e^{\Theta_k \xi_k^*})_{ij}] - \E[ \log (\prod_{1 \leq k \leq m}  e^{\Theta_k \xi_k^*})_{ij}].
\]
Although we do not have an explicit form for the expectations above, estimations by simulation are easily obtained, and this is what we did to compare $\omega_L^*$ and $\omega_L$ in the two examples below.

\begin{ex}[Continuation of Example \ref{ex1}] 
The random environment of Example \ref{ex1} has only two states and is automatically cyclic.  The new lower bound is $\omega_L^*=    0.6577$ and is indeed larger than $\omega_L=0.6107$. 
\end{ex}
 
 \begin{ex}[A three-state cyclic random environment] 
 This is similar to Example \ref{ex2}: it is a three-state random environment with matrices $\Omega_\ell$ defined in (\ref{omega1}, \ref{omega3}); the cyclic generator is 
$$Q= \left[\begin{array}{ccc} -4 &4&0\\
    0&-1&1\\
    6&0&-6\end{array}\right],$$
and the stationary distribution is $\vc\pi = [0.1765, 0.7059, 0.1176]$.  The bounds and the approximation are given in the table below
 \begin{center}
   \begin{tabular}{ccc|c|c}
   $\widetilde\omega_L $ &  $\omega_L$ &  $\omega_L^*$ & $\omega_{sim}$ & $\omega_U$ \\
\hline
0.1494 & 0.3480 & 0.3906 & 0.4218 & 0.4250
     \end{tabular}
 \end{center}
 \end{ex}
 
 We see that in both examples, the difference $\omega_{sim}-\omega_L^*$  is less than  half the difference $\omega_{sim}-\omega_L$.  Needless to say, we might apply the same procedure to the dual of Section~\ref{2du}, thereby obtaining another lower bound, closer to $\omega$ than $\widetilde\omega_L$. In the same manner as there is no systematic difference between $\widetilde\omega_L$ and $\omega_L$, we do not expect that there would be a systematic preference between this additional bound and $\omega_L^*$.

\section{Tightness of the bounds} 
   \label{s:tightness}

The bounds are tight in that there exist branching processes for which
the bounds are all equal and equal to $\omega$.  We show below that
such is the case for single-type processes ($r=1$), and for processes such that the matrices $\Omega_\ell$ commute for all $\ell$; it is in particular the case for  processes
with constant growth rate ($\Omega_\ell = \Omega$ for all $\ell$, for
some $\Omega$).

\begin{lem} If $r=1$, then
  $\omega_L=\widetilde\omega_L= \omega_L^* =
  \omega_U=\omega= \hat{\vc\pi} C^{-1} \vc\lambda$.
\end{lem}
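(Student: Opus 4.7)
The plan is to observe that when $r=1$, essentially every object in the construction collapses to a scalar triviality, and then to verify that each bound's ``correction term'' added to $\hat{\vc\pi} C^{-1}\vc\lambda$ vanishes.

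First I would note the following reductions for $r=1$: each matrix $\Omega_\ell$ is a single scalar, necessarily equal to its dominant eigenvalue $\lambda_\ell$; consequently $\Omega^*_\ell = 0$, the left- and right-eigenvectors collapse to $\vc u_\ell = \vc v_\ell = 1$, and by (\ref{theta}) the generator $\Theta_\ell = 0$ as well. Substituting these trivial values into (\ref{psi1}) gives $\Psi = 0$, so the identity (\ref{e:omegC}) of Lemma~\ref{t:markproduct} immediately yields $\omega = \hat{\vc\pi} C^{-1}\vc\lambda$.

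Next I would check each bound term. For $\omega_L$ (Theorem~\ref{t:omegal}) the additive correction is $\vc\pi^{(1)}(I-N)\log\bar{\vc v}$; since every entry of $\bar{\vc v}$ is $1$, $\log\bar{\vc v} = \vc 0$ and the correction vanishes. The same argument for $\bar{\vc u}$ handles $\widetilde\omega_L$ (Theorem~\ref{t:boundsB}). For $\omega_L^*$ (Theorem~\ref{t:omegastar}) the ratio defined in (\ref{e:Rk}) reduces to $R_0 = 1$ identically, so the expectations $A_{ij}$ in (\ref{e:star1}) are zero. Finally, for $\omega_U$ (Theorem~\ref{t:omegaU}) formula (\ref{e:Ml}) gives $M_\ell = c_\ell[(c_\ell+\lambda_\ell)-\lambda_\ell]^{-1} = 1$, so $M = I_m$ and $M(\hat P \otimes I_r) = \hat P$; since $\hat P$ is a stochastic matrix, $\sp(\hat P)=1$ and $\log \sp[M(\hat P \otimes I_r)] = 0$.

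Assembling these observations, every bound equals $\hat{\vc\pi} C^{-1}\vc\lambda$, which is also the exact value of $\omega$. There is essentially no obstacle: the lemma is a sanity-check asserting that in the single-type case all Lyapunov-style complications disappear, and the only thing to be careful about is verifying that the spectral radius computation for $\omega_U$ truly gives $1$ and not merely an upper bound of $1$, which follows from irreducibility of $\hat P$.
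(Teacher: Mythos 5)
Your proposal is correct and follows essentially the same route as the paper: collapse everything to scalars so that $\Omega^*_\ell=0$, $\Theta_\ell=0$, $\Psi=0$, hence $\omega=\hat{\vc\pi}C^{-1}\vc\lambda$, and then check that each bound's correction term vanishes (with $M_\ell=1$ giving $\sp[M(\hat P\otimes I)]=1$ for $\omega_U$). The only cosmetic differences are that you kill the lower-bound corrections via $\log\bar{\vc v}=\vc 0$ and $R_0\equiv 1$, whereas the paper uses $I-N=0$ and the sandwich $\omega_L\leq\omega_L^*\leq\omega$; both verifications are equally valid, and your remark about irreducibility of $\hat P$ is unnecessary since any stochastic matrix has spectral radius $1$.
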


\begin{proof}
  If $r=1$, all matrices reduce to scalars, so that
  $\lambda_\ell=\Omega_\ell$ and $\Omega_\ell^*=0$ for all $\ell$.
  Therefore, $\Psi=0$ by \eqref{psi1} and $\omega = \hat{\vc\pi} C^{-1}
  \vc\lambda$ by (\ref{e:omegC}).

Furthermore, $M_\ell=1$ for all $\ell$, so that  $M(\hat{P}\otimes
I)=\hat{P}$ is a stochastic matrix and $\log
\textrm{sp}[M(\hat{P}\otimes I)]=0$.  We have thus proved that
$\omega_U = \omega$.

Finally, $\Theta_\ell=0$, so that $N_\ell=1$,  for all $\ell$.
Therefore, $I-N=0$ and we conclude that $\omega_L = \omega$.  The same
argument gives us $\widetilde\omega_L = \omega$.   Since $\omega_L \leq \omega_L^* \leq \omega$, this shows that $\omega_L^* = \omega$ as well.
\end{proof}

\begin{lem} If the matrices $\Omega_\ell$ are mutually commutative for all $1\leq \ell\leq m$, then $\omega_L = \widetilde\omega_L = \omega_L^* = \omega_U=\omega =
  \hat{\vc\pi} C^{-1} \vc\lambda$.
\end{lem}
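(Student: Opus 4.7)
The plan is to exploit that commuting irreducible matrices with nonnegative off-diagonals share their Perron eigenvectors, which collapses the entire framework of Sections~\ref{s:dual}--\ref{s:improve}. First I would observe that the Perron eigenspace of each $\Omega_\ell$ is one-dimensional and spanned by a strictly positive vector, and that commuting matrices preserve one another's eigenspaces. Using the fact that the only strictly positive eigenvector of an irreducible matrix with nonnegative off-diagonals is its Perron eigenvector (shift $\Omega_\ell$ by $c_0 I$ to reduce to the nonnegative Perron--Frobenius theorem), one concludes $\vc v_\ell = \vc v$ and $\vc u_\ell = \vc u$ for all $\ell$. Consequently $\Delta_\ell = \Delta := \diag(\vc v)$, the generators $\Theta_\ell = \Delta^{-1}\Omega^*_\ell \Delta$ pairwise commute, and $\bar{\vc v} = \vone_m\otimes\vc v$.

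To identify $\omega$, note that the marks in \eqref{e:markz} reduce to $Z_k = v_{\varphi_k}/v_{\varphi_{k+1}}$, and the product telescopes: on $\{\varphi_0=i\}$, $Z_0 Z_1 \cdots Z_{n-1}\mathds{1}_{\{\varphi_n=j\}} = (v_i/v_j)\mathds{1}_{\{\varphi_n=j\}}$. Hence the conditional expectation in \eqref{psi3} equals $(v_i/v_j)\,\p[\varphi_n=j\mid \varphi_0=i,\vc\xi^{(n)},\hat{\vc X}^{(n)}]$, and this probability is the $(i,j)$-entry of the stochastic matrix $\prod_k e^{\Theta_{\hat X_k}\xi_k}$, which by commutativity equals $e^{\sum_\ell \Theta_\ell T_\ell^{(n)}}$, with $T_\ell^{(n)}/n \to \hat\pi_\ell/c_\ell$ a.s. Since $\Theta^* := \sum_\ell (\hat\pi_\ell/c_\ell)\Theta_\ell$ is an irreducible generator, the $(i,j)$-entry is bounded below by a positive constant for large $n$. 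Therefore $\tfrac{1}{n}$ times its logarithm vanishes a.s., so $\Psi = 0$ and $\omega = \hat{\vc\pi}C^{-1}\vc\lambda$ through \eqref{e:omegC}.

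For the upper bound, $\Omega_\ell \vc v = \lambda_\ell \vc v$ inserted in \eqref{e:Ml} yields $M_\ell \vc v = c_\ell[(c_\ell+\lambda_\ell)I - \Omega_\ell]^{-1}\vc v = \vc v$, so $M(\hat P\otimes I_r)\bar{\vc v} = \bar{\vc v}$ as $\hat P\vone = \vone$. The matrix $M(\hat P\otimes I_r)$ is nonnegative and admits the strictly positive right eigenvector $\bar{\vc v}$ with eigenvalue $1$, so by the Collatz--Wielandt characterisation $\mathrm{sp}[M(\hat P\otimes I_r)] = 1$, whence $\omega_U = \hat{\vc\pi}C^{-1}\vc\lambda$.

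For the lower bounds, I go back to the intermediate form $\omega_L = \hat{\vc\pi}C^{-1}\vc\lambda + (\vc\pi^{(1)} - \vc\pi^{(2)})\log\bar{\vc v}$ appearing in the proof of Theorem~\ref{t:omegal}. Since $\bar{\vc v}$ has identical blocks, $\log\bar{\vc v} = \vone_m\otimes \log\vc v$, and the correction becomes $\bigl(\sum_\ell \vc\pi^{(1)}_\ell - \sum_\ell \vc\pi^{(2)}_\ell\bigr)\log\vc v$. The two sums are respectively the stationary marginal distributions of $\varphi_k$ and of $\varphi_{k+1}$ in the joint Markov chain $\{(\hat X_k,\varphi_k)\}$, and they coincide by shift invariance, so the correction vanishes and $\omega_L = \hat{\vc\pi}C^{-1}\vc\lambda$. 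The symmetric argument applied to the left-eigenvector dual of Section~\ref{2du} gives $\widetilde\omega_L = \hat{\vc\pi}C^{-1}\vc\lambda$, and the sandwich $\omega_L \leq \omega_L^* \leq \omega$ from Theorem~\ref{t:omegastar} then forces $\omega_L^* = \omega$. The delicate point is justifying that $\tfrac{1}{n}\log\p[\varphi_n = j\mid \cdots]\to 0$ a.s.; everything else is identification of common Perron eigenvectors and bookkeeping.
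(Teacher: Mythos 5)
Your proof is correct, and it overlaps with the paper's only in part, so it is worth recording where the routes differ. The upper-bound step is the same as the paper's: the common right Perron eigenvector gives $M_\ell\vc v=\vc v$, hence $M(\hat P\otimes I_r)(\vone\otimes\vc v)=\vone\otimes\vc v$ and $\mathrm{sp}[M(\hat P\otimes I_r)]=1$. For $\omega_L$, however, the paper works with the \emph{left} Perron eigenvector $\vc w$ of the commuting generators $\Theta_\ell$: from $\vc w N_\ell=\vc w$ it identifies $\vc\pi^{(1)}=\hat{\vc\pi}\otimes\vc w$ and concludes $\vc\pi^{(1)}(I-N)=\vc 0$. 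You never identify $\vc\pi^{(1)}$; you use instead that $\log\bar{\vc v}=\vone_m\otimes\log\vc v$ has identical blocks, together with the identity $\vc\pi^{(1)}(\vone_m\otimes I_r)=\vc\pi^{(1)}N(\hat P\otimes I_r)(\vone_m\otimes I_r)=\vc\pi^{(2)}(\vone_m\otimes I_r)$ (equality of the $\varphi$-marginals), which holds with no commutativity whatsoever -- commutativity enters only through $\vc v_\ell=\vc v$. This is slightly more economical (only the right eigenvectors are needed, and it transfers verbatim to $\widetilde\omega_L$), whereas the paper's explicit formula $\vc\pi^{(1)}=\hat{\vc\pi}\otimes\vc w$ is more informative. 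You also compute $\omega$ directly by showing $\Psi=0$ (telescoping marks and $\prod_k e^{\Theta_{\hat X_k}\xi_k}=e^{\sum_\ell\Theta_\ell T_\ell^{(n)}}$), whereas the paper never needs this: the sandwich $\omega_L\leq\omega\leq\omega_U$ does the work once both bounds collapse to $\hat{\vc\pi}C^{-1}\vc\lambda$. Your extra step is nevertheless sound, and the point you flag as delicate closes easily: for large $n$ the generator $G_n=\sum_\ell\Theta_\ell T_\ell^{(n)}/n$ is close to the irreducible generator $\Theta^*$, so $e^{G_n}$ is a stochastic matrix whose entries all exceed some $\varepsilon>0$; since multiplying a stochastic matrix by a stochastic matrix with entries at least $\varepsilon$ keeps all entries at least $\varepsilon$, the entries of $e^{nG_n}=(e^{G_n})^n$ lie in $[\varepsilon,1]$, so $\tfrac{1}{n}\log(\cdot)_{ij}\to 0$ a.s. Your handling of $\omega_L^*$ through $\omega_L\leq\omega_L^*\leq\omega$ coincides with the paper's.
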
 

\begin{proof}
Once more, it is enough to prove that $\textrm{sp}[M(\hat{P}\otimes I)]=1$ and $\vc\pi^{(1)} (I-N)=0$. Note that since the matrices $\Omega_\ell$ are mutually commutative for all $\ell$, their all share the same strictly positive Perron-Frobenius right-eigenvector, that we denote by $\vc v$. Let $\lambda_\ell$ denote the eigenvalue of maximal real part of $\Omega_\ell$. Then the matrices $M_\ell$ defined in (\ref{e:Ml}) are such that
\begin{align*}
M_\ell \vc v & = c_\ell ((c_\ell + \lambda_\ell) I - \Omega_\ell)^{-1} \vc v  \\
  & = c_\ell (c_\ell + \lambda_\ell - \lambda_\ell)^{-1} \vc v \\
  & = \vc v.
\end{align*}
Therefore, 
\[
M (\hat P \otimes I) (\vone \otimes \vc v) = M (\vone
\otimes \vc v) = \vone \otimes \vc v,
\]
and since $\vone \otimes \vc v > \vc 0$, this proves that
$\textrm{sp}[M(\hat{P}\otimes I)]=1$ by the Perron-Frobenius Theorem.

Second, the mutual commutativity of the matrices $\Omega_\ell$ implies the mutual commutitivity of the matrices $\Theta_\ell$, and we denote by $\vc w$ the common strictly positive left-eigenvector of the matrices $\Theta_\ell$ associated to their dominant eigenvalue 0, normalized such that $\vc w\vc 1=1$. The matrices $N_\ell$ defined in
(\ref{Ni}) are then such that
\[
\vc w N_\ell = c_\ell \vc w (c_\ell I - \Theta_\ell)^{-1} = \vc w
\]
for all $\ell$, and so
\[
(\hat{\vc \pi} \otimes \vc w) N (\hat P \otimes I) =
(\hat{\vc \pi} \otimes \vc w) (\hat P \otimes I) =
\hat{\vc \pi} \otimes \vc w.
\]
We conclude, on the one hand, that $\vc\pi^{(1)}=\hat{\vc \pi} \otimes \vc w$  and, on the other hand, that
$\vc\pi^{(1)} (I-N) = \vc 0$.
\end{proof}

%
%

\section*{Acknowledgements} Sophie Hautphenne is supported by the
Australian Research Council (ARC) grant DP110101663.  Guy Latouche
acknowledges the support of the Minist\`ere de la Communaut\'e
fran\c{c}aise de Belgique through the ARC grant AUWB-08/13--ULB~5.

\bibliographystyle{abbrv} \bibliography{MBTRE}

\end{document}